\documentclass[11pt]{amsart}
%%%%%%%%%%%%%%%%%%%%%%%%%%%%%%%%%%%%%%%%%%%%%%%%%%%%%%%%%%%%%%%%%%%%%%%%%%%%%%%%%%%%%%%%%%%%%%%%%%%%%%%%%%%%%%%%%%%%%%%%%%%%%%%%%%%%%%%%%%%%%%%%%%%%%%%%%%%%%%%%%%%%%%%%%%%%%%%%%%%%%%%%%%%%%%%%%%%%%%%%%%%%%%%%%%%%%%%%%%%%%%%%%%%%%%%%%%%%%%%%%%%%%%%%%%%%
\usepackage{amsfonts}
\usepackage{amssymb}
\usepackage{amsmath}
\numberwithin{equation}{section}
\usepackage{dsfont}
\usepackage{color}
\usepackage{graphicx}
\setcounter{MaxMatrixCols}{10}

\textwidth=480pt
\textheight=695pt
\oddsidemargin=-5pt
\evensidemargin=-5pt
\topmargin=-15pt
\newtheorem{theorem}{Theorem}[section]
\newtheorem{lemma}[theorem]{Lemma}
\newtheorem{proposition}[theorem]{Proposition}
\newtheorem{corollary}[theorem]{Corollary}

\newtheorem{observation}[theorem]{Observation}
\theoremstyle{definition}
\newtheorem{df}{Definition}
\newtheorem{example}[df]{Example}
\newtheorem{remark}[df]{Remark}

\newcommand{\N}{\mathbb N}

\newcommand{\R}{\mathbb R}
\newcommand{\Z}{\mathbb Z}
\newcommand{\ve}{\varepsilon}

\linespread{1.3}

\makeatletter
\@namedef{subjclassname@2020}{%
  \textup{2020} Mathematics Subject Classification}
\makeatother

\subjclass[2020]{28A80, 05B10, 11A67} 
\keywords{Cantor sets, Cantorvals, algebraic difference of sets, p-adic sets}

\begin{document}
\author{Piotr Nowakowski}
\address{Faculty of Mathematics and Computer Science, University of \L \'{o}d\'{z},
Banacha 22, 90-238 \L \'{o}d\'{z}, Poland}
\address{Institute of Mathematics, Czech Academy of Sciences,
\v{Z}itn\'a 25, 115 67 Prague 1, Czech Republic\\
ORCID: 0000-0002-3655-4991}
\email{piotr.nowakowski@wmii.uni.lodz.pl}

\title{Characterization of the algebraic difference of special affine Cantor sets}
\date{}

\begin{abstract}
We investigate some self-similar Cantor sets $C(l,r,p)$, which we call S-Cantor sets, generated by numbers $l,r,p \in \N$, $l+r<p$. We give a full characterization of the set $C(l_1,r_1,p)-C(l_2,r_2,p)$ which can take one of the form: the interval $[-1,1]$, a Cantor set, an L-Cantorval, an R-Cantorval or an M-Cantorval. As corollaries we give examples of Cantor sets and Cantorvals, which can be easily described using some positional numeral systems.
\end{abstract}

\maketitle

\section{Introduction}
We denote by $A\pm B$ the set $\left\{ a\pm
b:a\in A,\,b\in B\right\} $, where $A,B\subset \mathbb{R}$. The set $A- B$ is called the algebraic difference of sets $A$ and $B$. The set $A-A$ is called the difference set of a set $A$. We will also write $a+A$ rather than $\{a\} + A$ for $a \in \R$. 
Let $I \subset \R$ be an interval. We denote by $l(I)$, $r(I)$ the left and the right endpoint of $I$, respectively,.

We say that a set $C\subset \R$ is a Cantor set if it is nonempty, compact, perfect and nowhere dense.

For a set $C \subset \R$, every 
%bounded 
component of 
the set $\R \setminus C$ is called a gap of $C$. A component of $C$ is called proper if it is not a singleton.

Let us recall the definitions of three types of Cantorvals (compare \cite{MO}). A perfect set $E \subset \R$ is called an M-Cantorval if it has infinitely many gaps and both endpoints of any gap are accumulated by gaps and proper components of $E$.
A perfect set $E \subset \R$ is called an L-Cantorval (respectively, an R-Cantorval) if it has infinitely many gaps, the left (right) endpoint of any gap is accumulated by gaps and proper components of $E$, and the right (left) endpoint of any gap is an endpoint of a proper component of $E$.

Many authors were examining algebraic differences or sums of various types of Cantor sets (e.g.  \cite{AC}, \cite{FN}, \cite{HKY}, \cite{MY}, \cite{N}, \cite{P}, \cite{FF}, \cite{S}, \cite{Sol}). One can find its application for example in spectral theory (see \cite{DGS},\cite{T16}) or number theory (see \cite{H}).

The algebraic difference (or sum) of dynamically defined Cantor sets is of special interest. The question about the structure of algebraic difference of such sets appear naturally in the studies of homoclinic bifurcations of dynamical systems (see \cite{PT}). In \cite{MO}, the classes of so-called regular, affine and homogeneous Cantor sets were considered. Every homogeneous Cantor set is affine, and every affine is regular. Affine Cantor sets are also self-similar. There was proved that the algebraic sum of two homogeneous Cantor sets is either a Cantor set, a finite union of closed intervals, an L-Cantorval, an R-Cantorval or an M-Cantorval. In this paper the authors posed three open questions:

1. Does the similar result hold generically for affine Cantor sets?  

2. What can be said about the topological structure of the sum of two
regular Cantor sets?

3. Is it possible to characterize each one of the five possibilities for the algebraic sum of homogeneous Cantor sets?

In \cite{ACI}, the positive answer to the first question was given. As far as we know, there are still no answers to the remaining questions. However, studies on homogeneous and affine Cantor sets are still conducted by many researchers (e.g. \cite{P}, \cite{T19}, \cite{E}).

In our paper we present a class of S-Cantor subsets of $[0,1]$. They are affine (in the sense given in \cite{MO}), but not homogeneous. For such sets we prove not only that the algebraic sum can be either a Cantor set, a finite union of closed intervals, an L-Cantorval, an R-Cantorval or an M-Cantorval, but we also give a full characterization when every class occurs. It answers questions analogous to questions 2. and 3. mentioned above for this particular class of Cantor sets.

\section{The algebraic difference of S-Cantor sets}

In this section we consider another type of Cantor sets which was examined for example in \cite{R}, \cite{Z}, \cite{KRC}.
We call them $p$-Cantor sets. In the class of $p$-Cantor sets we will distinguish the subclass of S-Cantor sets. But first, we need some additional notation. Let $A \subset \Z$ be a finite set, $p \in \N$, $p \geq 2$. Set
$$A_p:=\left\{\sum_{i=1}^{\infty}\frac{x_i}{p^i}\colon x_i\in A\right\}.$$
This notation comes from \cite{R}. For example, we have 
$\Z(p)_p = [0, 1]$, $\{-p+1,-p+2, \dots, p-2, p-1\}_p = [-1,1]$, $\{0\}_p =\{0\}$, $\{p - 1\}_p = \{1\}$, $\{0, 2\}_3 = C$,
where $C$ is the classical Cantor ternary set, and $\Z(p) := \{0, 1, \dots, p - 1\}$.
Note that, if $A\subset\Z(p)$, the set $A_p$ consists of all real numbers in $[0,1]$ having $p$-adic expansions 
with all digits in $A$.

We will now give some easy but useful properties of the sets $A_p$.
\begin{proposition} \label{dod}
Let $A$ and  $B$ be finite subsets of $\Z$, $p \in \N,$ $p \geq 2,$ $k \in \Z.$ Then:
\begin{itemize}
\item[\mbox{(i)}]$ A_p + B_p = (A + B)_p;$
\item[\mbox{(ii)}]$A_p - B_p = (A - B)_p;$
\item[\mbox{(iii)}]$kA_p =(kA)_p.$
\end{itemize}
\end{proposition}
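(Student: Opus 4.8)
The plan is to unwind the definition $A_p=\{\sum_{i=1}^{\infty}x_i/p^i:x_i\in A\}$ and prove each identity by exhibiting, for an arbitrary element of one side, an explicit series representation witnessing its membership in the other side. The only analytic input needed is that, since $A$ and $B$ are \emph{finite} subsets of $\Z$, every admissible digit sequence is uniformly bounded, so all the series in question converge absolutely; this is what legitimizes combining two such series term by term and rearranging.

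I would first dispose of (iii), which is merely the homogeneity of the series map: if $t=\sum_{i=1}^{\infty}x_i/p^i\in A_p$ then $kt=\sum_{i=1}^{\infty}(kx_i)/p^i$ with each $kx_i\in kA$, so $kt\in(kA)_p$; conversely every element of $(kA)_p$ has the form $\sum_{i=1}^{\infty}(kx_i)/p^i=k\sum_{i=1}^{\infty}x_i/p^i$ for some $x_i\in A$, hence lies in $kA_p$. (For $k=0$ this reduces to the trivial $\{0\}=\{0\}_p$.)

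Next I would treat (i). The inclusion $A_p+B_p\subseteq(A+B)_p$ is immediate: choosing representations $a=\sum x_i/p^i$ and $b=\sum y_i/p^i$ with $x_i\in A$, $y_i\in B$, absolute convergence gives $a+b=\sum(x_i+y_i)/p^i$ with each $x_i+y_i\in A+B$. For the reverse inclusion I would fix once and for all a ``splitting'' map $\sigma\colon A+B\to A\times B$, $\sigma(z)=(\sigma_1(z),\sigma_2(z))$ with $\sigma_1(z)+\sigma_2(z)=z$ (such a map exists since $A+B$ is finite); then, given $c=\sum z_i/p^i$ with $z_i\in A+B$, setting $x_i:=\sigma_1(z_i)\in A$ and $y_i:=\sigma_2(z_i)\in B$ yields $c=\sum x_i/p^i+\sum y_i/p^i\in A_p+B_p$. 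Finally (ii) drops out by combining the previous two parts: $A_p-B_p=A_p+(-1)B_p=A_p+(-B)_p=(A+(-B))_p=(A-B)_p$.

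I do not expect a genuine obstacle here; the one point that deserves care is that the series representation of a point of $A_p$ need not be unique (for instance when $A=\Z(p)$), so every argument must be phrased in terms of the \emph{existence} of a representation with digits in the relevant set rather than the manipulation of a canonical expansion, and the term-by-term combination of series must be justified by the absolute convergence coming from the finiteness of $A$ and $B$.
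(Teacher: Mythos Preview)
Your proposal is correct and follows essentially the same approach as the paper, which proves (i) by the very same term-by-term combination of series (compressed into a single chain of set equalities rather than two separate inclusions) and then dismisses (ii) and (iii) as analogous. Your explicit mention of absolute convergence and of a splitting map for the reverse inclusion, and your derivation of (ii) from (i) and (iii) via $A_p-B_p=A_p+(-B)_p$, are tidier than the paper's treatment but do not constitute a genuinely different route.
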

\begin{proof}
(i) We have
$$A_p + B_p =\left\{\sum_{i=1}^{\infty}\frac{x_i}{p^i}\colon x_i\in A\right\} + \left\{\sum_{i=1}^{\infty}\frac{y_i}{p^i}\colon y_i\in B\right\} =\left\{\sum_{i=1}^{\infty}\frac{x_i}{p^i}+\sum_{i=1}^{\infty}\frac{y_i}{p^i}\colon x_i\in A,y_i \in B\right\}$$
$$= \left\{\sum_{i=1}^{\infty}\frac{x_i+y_i}{p^i}\colon x_i\in A,y_i \in B\right\}=\left\{\sum_{i=1}^{\infty}\frac{z_i}{p^i}\colon z_i\in A+B\right\}=(A + B)_p.$$
The proof of (ii) and (iii) is analogous. 
\end{proof}

The next proposition is a mathematical folklore (see, e.g. \cite{R}), so we omit the proof, which is quite technical.
\begin{proposition} \label{P1}
Let $A$ be a finite subset of $\Z$ and $p \in \N, p \geq 2$. Then:
\begin{itemize}
\item[\mbox{(i)}]  $A_p$ is a closed set.
\item[\mbox{(ii)}] If $A$ has more than $1$ element, then $A_p$ is a perfect set.
\item[\mbox{(iii)}] If $A \subsetneq \Z(p)$, then $A_p$ is nowhere dense.
\item[\mbox{(iv)}] If $A \subsetneq \Z(p)$ and $A$ has more than $1$ element, then $A_p$ is a Cantor set.
\end{itemize}
\end{proposition}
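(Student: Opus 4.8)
The plan is to exploit the self-similarity of $A_p$: with $f_a\colon\R\to\R$ defined by $f_a(x)=(x+a)/p$, a direct computation from the definition gives $A_p=\bigcup_{a\in A}f_a(A_p)$, and by iteration $A_p=\bigcup_{w\in A^{k}}f_w(A_p)$ for every $k\in\N$, where $f_w=f_{a_1}\circ\cdots\circ f_{a_k}$ (for $w=(a_1,\dots,a_k)\in A^{k}$) is an affine map contracting by the factor $p^{-k}$. For (i) and (ii) I would also identify $A_p$ with the image of the compact product space $A^{\N}$ (with $A$ discrete) under the map $\sigma\colon(x_i)_i\mapsto\sum_{i=1}^{\infty}x_ip^{-i}$, which is continuous since the defining series converges uniformly, being dominated by $\bigl(\max_{a\in A}|a|\bigr)\sum_{i\ge1}p^{-i}$.

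For (i), I would note that $A^{\N}$ is compact by Tychonoff and $\sigma$ is continuous, so $A_p=\sigma(A^{\N})$ is compact, hence closed (it is moreover bounded, contained in $\bigl[\tfrac{\min A}{p-1},\tfrac{\max A}{p-1}\bigr]$). For (ii), $A_p\neq\emptyset$ since $A\neq\emptyset$; to rule out isolated points I would fix $x=\sum_ix_ip^{-i}\in A_p$, pick distinct $a,b\in A$, and for each $n$ take $y^{(n)}\in A_p$ agreeing with $x$ in every digit except the $n$-th, where I put $b$ if $x_n=a$ and $a$ otherwise. Then $0<|y^{(n)}-x|\le p^{-n}\operatorname{diam}(A)\to0$, so $x$ is an accumulation point of $A_p$; together with (i) this shows $A_p$ is perfect.

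For (iii) the cleanest route is a measure estimate. From $A_p=\bigcup_{w\in A^{k}}f_w(A_p)$ and the contraction factor $p^{-k}$, each $f_w(A_p)$ lies in an interval of length $p^{-k}\operatorname{diam}(A_p)$, so the outer measure satisfies $\lambda^{*}(A_p)\le|A|^{k}p^{-k}\operatorname{diam}(A_p)=(|A|/p)^{k}\operatorname{diam}(A_p)$. Since $A\subsetneq\Z(p)$ forces $|A|<p$, letting $k\to\infty$ yields $\lambda(A_p)=0$; a closed set of Lebesgue measure zero has empty interior, hence $A_p$ is nowhere dense. (A more combinatorial variant --- presumably the one the author calls ``quite technical'' --- would instead suppose $A_p$ contains a nondegenerate interval $J$, choose $k$ with $2p^{-k}\le|J|$ so that $J$ contains a level-$k$ $p$-adic interval $I_w$, $w\in\Z(p)^{k}$, and use that distinct level-$k$ $p$-adic intervals meet in at most one point, together with $A_p\subseteq\bigcup_{a\in A}\bigl[\tfrac ap,\tfrac{a+1}p\bigr]$ and a digit $d\in\Z(p)\setminus A$, to force a contradiction.) Part (iv) is then immediate: $A_p$ is nonempty, compact by (i), perfect by (ii) (using $|A|\ge2$), and nowhere dense by (iii) (using $A\subsetneq\Z(p)$), hence a Cantor set in the sense recalled above.

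The only step I expect to need care is (iii); with the measure estimate it is short, the only points to verify being that $\operatorname{diam}(A_p)<\infty$ (clear, as $A$ is finite) and that the decomposition $A_p=\bigcup_{w\in A^{k}}f_w(A_p)$ really is the $k$-fold iterate of the one-step self-similarity --- a routine induction from the definition. Everything else rests on standard compactness and measure-theoretic facts.
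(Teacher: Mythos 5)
Your proof is correct, but there is nothing in the paper to compare it against: the author explicitly declines to prove Proposition~\ref{P1}, calling it ``mathematical folklore'' and pointing to \cite{R}, so any sound argument here is acceptable. Your route is the standard one for such self-similar sets and all the steps check out. For (i), the coding map $\sigma\colon A^{\N}\to\R$ is indeed continuous (uniform convergence of the tails) and $A^{\N}$ is compact, so $A_p=\sigma(A^{\N})$ is compact. For (ii), changing only the $n$-th digit changes the value by exactly $(y_n-x_n)p^{-n}\neq 0$, since all other terms of the two series cancel, so the constructed points are genuinely distinct from $x$ and converge to it; this is the one place where a careless argument could go wrong (non-uniqueness of digit expansions), and your construction sidesteps it correctly. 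For (iii), the iterated decomposition $A_p=\bigcup_{w\in A^{k}}f_w(A_p)$ gives $\lambda^{*}(A_p)\le(|A|/p)^{k}\operatorname{diam}(A_p)\to 0$ because $A\subsetneq\Z(p)$ forces $|A|<p$, and a closed null set is nowhere dense; (iv) then assembles the pieces exactly as the paper's definition of a Cantor set (nonempty, compact, perfect, nowhere dense) requires. The measure-theoretic shortcut in (iii) is arguably cleaner than the ``quite technical'' combinatorial argument the author alludes to, at the mild cost of invoking Lebesgue outer measure; your parenthetical sketch of the purely combinatorial alternative is also viable. No gaps.
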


For $i, j \in\Z,$ we set 
$$\langle i, j\rangle := \left\{ \begin{array}{ccc}
[i, j]\cap \Z \;\text{ if }\; i < j \\ 
\{i\} \;\text{ if }\; i=j\\
\emptyset \;\text{ if }\; i>j.%
\end{array}%
\right. $$

If $(x_i) \in \langle -p, p \rangle^{n}$, then we will write $\bar{x}_n = \sum_{i=1}^n \frac{x_i}{p^i},$ for $n \in \N$. 

In the sequel we will consider sets $A_p$, where $p \in \N$, $p>2$, $A \subset \langle -p+1,p-1 \rangle$ and $-p+1,0,p-1 \in A$. Note that $A_p \subset [-1,1]$. 
Let $n \in \N, x \in [-1,1].$ We say that $x$ is $(n)$-bi-obtainable if for some $k \in \langle -p^n, p^n-1 \rangle$ we have $x \in [\frac{k}{p^n},\frac{k+1}{p^n}]$ and there exist sequences $(y_i),(z_i) \in A^{n}$ such that $\overline{y}_n = \frac{k}{p^n}$ and $\overline{z}_n = \frac{k}{p^n} + \frac{1}{p^n}$. 

\begin{remark} \label{ibo}
Observe that the condition stating that there exist sequences $(y_i),(z_i) \in A^{n}$ such that $\overline{y}_n = \frac{k}{p^n}$ and $\overline{z}_n = \frac{k}{p^n} + \frac{1}{p^n}$ implies that $\frac{k}{p^n}, \frac{k}{p^n} + \frac{1}{p^n} \in A_p$. Indeed, 
$$\frac{k}{p^n} = \sum_{i=1}^n\frac{y_i}{p^i}+\sum_{i=n+1}^{\infty} \frac{0}{p^i} \in A_p.$$ 
Similarly, 
$$\frac{k}{p^n}+ \frac{1}{p^n} = \sum_{i=1}^n\frac{z_i}{p^i}+\sum_{i=n+1}^{\infty} \frac{0}{p^i} \in A_p.$$ 
\end{remark}

We will often use the following easy observations.
\begin{observation} \label{wiel}
\begin{itemize}
\item[(i)] For any $ (x_n) \in \langle -p, p \rangle ^{\N}$ and any $n \in \N$ we have 
$$\overline{x}_n = \frac{k}{p^n}\mbox{ for some }k \in \Z.$$ 

\item[(ii)] For every $n \in \N,k \in \langle -p^n , p^n \rangle$ there exists a sequence $(w_i) \in  \langle -p, p \rangle^n$  such that $\overline{w}_n = \frac{k}{p^n}$.

\item[(iii)] For every $n \in \N, k \in \langle -p^n+1 , p^n-1 \rangle$ there exists a sequence $(w_i) \in  \langle -p+1, p-1 \rangle^n$ such that $\overline{w}_n = \frac{k}{p^n}$. 

\item[(iv)] If $x \in [0,\frac{1}{p^n}]$, then there is a sequence $(w_i) \in  \langle 0, p-1 \rangle^{\N}$ such that $\sum_{i=n+1}^{\infty} \frac{w_i}{p^i} = x$. 

\item[(v)] If $x \in [-\frac{1}{p^n},0]$, then there is a sequence $(w_i) \in  \langle -p+1, 0 \rangle^{\N}$ such that $\sum_{i=n+1}^{\infty} \frac{w_i}{p^i} = x$. 
\end{itemize}
\end{observation}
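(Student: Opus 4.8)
The plan is to treat the five parts in three groups, each reducing to an elementary fact about positional representations; I expect the whole proof to be short.

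For (i), since every $x_i \in \langle -p,p\rangle \subset \Z$, multiplying by $p^n$ gives $p^n\overline{x}_n = \sum_{i=1}^n x_i p^{n-i}$, which is an integer; calling it $k$ yields $\overline{x}_n = k/p^n$. Nothing else is needed.

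For (ii) and (iii) I would argue by induction on $n$, in both cases using Euclidean division to peel off the leading digit. Note first that $\overline{w}_n = k/p^n$ is equivalent to $\sum_{i=1}^n w_i p^{n-i} = k$, and that it suffices to treat $k \ge 0$: the case $k<0$ follows by applying the nonnegative case to $-k$ and replacing each digit $w_i$ by $-w_i$, since both digit sets $\langle -p,p\rangle$ and $\langle -p+1,p-1\rangle$ are symmetric about $0$. For $k\ge 0$ the base case $n=1$ is $w_1=k$, which is legal because $0\le k\le p$ (resp. $0\le k\le p-1$). For the inductive step write $k = w_1 p^{n-1}+k'$ with $0\le k'\le p^{n-1}-1$ (division by $p^{n-1}$). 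Then $k'$ lies in the range allowed by the hypothesis at level $n-1$ (namely $|k'|\le p^{n-1}$, resp. $|k'|\le p^{n-1}-1$), while $w_1=(k-k')/p^{n-1}$ satisfies $0\le w_1\le k/p^{n-1}\le p$ (resp. $<p$, hence $w_1\le p-1$), so $w_1$ is a legal leading digit; applying the hypothesis to $k'$ with the tail $(w_2,\dots,w_n)$ completes the step.

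For (iv) and (v), in (iv) I would set $t:=p^n x\in[0,1]$ and invoke the standard fact that every real in $[0,1]$ has a base-$p$ expansion $t=\sum_{j=1}^{\infty} v_j/p^j$ with all $v_j\in\langle 0,p-1\rangle$ (for $t=1$ take all $v_j=p-1$). Putting $w_{n+j}:=v_j$ for $j\ge 1$ and $w_1,\dots,w_n:=0$ gives $\sum_{i=n+1}^{\infty} w_i/p^i = p^{-n}\sum_{j=1}^{\infty} v_j/p^j = p^{-n}t = x$. Part (v) follows by applying (iv) to $-x\in[0,\tfrac{1}{p^n}]$ and negating every digit, which moves them from $\langle 0,p-1\rangle$ into $\langle -p+1,0\rangle$.

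There is no deep obstacle here; the only point requiring a little care is the bookkeeping in (ii)–(iii) — checking that the leading digit produced by Euclidean division never escapes the prescribed digit set, which is exactly where the hypotheses $k\le p^n$ (respectively $k\le p^n-1$) enter — and, in (iv), remembering that the endpoint $x=1/p^n$ is handled by the expansion terminating in $(p-1)$'s.
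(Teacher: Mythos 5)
Your proof is correct. The paper states this Observation without any proof (it is presented as an easy fact to be taken for granted), and your elementary verification — clearing denominators for (i), induction via Euclidean division on the leading digit for (ii)--(iii), and rescaling to a standard base-$p$ expansion for (iv)--(v) — is exactly the routine argument the author is implicitly relying on, with the digit-range bookkeeping done correctly.
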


We will also need the following technical lemma.
\begin{lemma} \label{Lpom}
Let $p \in \N$, $p>2$, $A \subset \langle -p+1,p-1 \rangle$ and $-p+1,0,p-1 \in A$.
\begin{itemize}
\item[\mbox{(1)}]  If $x \in A_p,$ $(x_i) \in A^{\N}$ is such that $x=\sum_{i=1}^{\infty}\frac{x_i}{p^i}$ and, for some $n \in \N$ and $(y_i) \in \langle-p, p-1 \rangle^n$, $x \in (\overline{y}_n,\overline{y}_n+ \frac{1}{p^n})$, then $\overline{x}_n = \overline{y}_n$ or $\overline{x}_n = \overline{y}_n + \frac{1}{p^n}$.
Moreover, 

\item[\mbox{(1.1)}] if $\overline{x}_n =\overline{y}_n$ and $y_n > 0$, then $x_n = y_n$ or $x_n = -p+y_n$;

\item[\mbox{(1.2)}] if $\overline{x}_n = \bar{y}_n$ and $y_n < 0$, then $x_n = y_n$ or $x_n = p+y_n$;

\item[\mbox{(1.3)}] if $\overline{x}_n = \overline{y}_n + \frac{1}{p^n}$ and $y_n > 0$, then $x_n = y_n+1$ or $x_n = -p+1+y_n$;

\item[\mbox{(1.4)}] if $\overline{x}_n = \overline{y}_n + \frac{1}{p^n}$ and $y_n < 0$, then $x_n = y_n +1$ or $x_n = p+y_n+1$.

\item[\mbox{(1.5)}] if $\overline{x}_n = \overline{y}_n$ and $y_n = 0$, then $x_n = 0$.

\item[\mbox{(2)}] If $(x,y)$ is a gap in $A_p$, $(x_i), (y_i) \in A^{\N}$ are such that $x=\sum_{i=1}^{\infty}\frac{x_i}{p^i}$ and $y=\sum_{i=1}^{\infty}\frac{y_i}{p^i}$, then there are $n,m \in \N$ such that $x_n+1,y_m-1 \notin A$ and $x_i = p-1, y_j=-p+1$ for $i > n$, $j > m$. 

\item[\mbox{(2')}] If $ w \in [-1,1]$ is such that $w = \overline{v}_k$ for some $k \in \N$ and $(v_i) \in A^{k}$, then $w$ is not an endpoint of a gap in $A_p$. 

\item[\mbox{(3)}] Let $x \in [-1,1]$. If there exists $n \in \N$ such that $x$ is $(i)$-bi-obtainable for $i \geq n$, then $x \in A_p$.

\item[\mbox{(4)}] Let $x \in [-1,1]$. If for any $k \in \langle 0, p-1 \rangle$ we have $k \in A$ or $k-p \in A$ and there is $n \in \N$ such that $x$ is $(n)$-bi-obtainable, then $x$ is $(i)$-bi-obtainable for $i \geq n$.
\end{itemize}
\end{lemma}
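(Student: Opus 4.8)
The plan is to prove Lemma \ref{Lpom} part by part, since the five items are largely independent but share the same underlying mechanism: controlling $p$-adic expansions with digits in $A \subset \langle -p+1,p-1 \rangle$ and exploiting that $0, \pm(p-1) \in A$.

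For part (1), I would argue as follows. Since $x \in (\overline{y}_n, \overline{y}_n + \tfrac{1}{p^n})$ and, by Observation \ref{wiel}(i), $\overline{x}_n = \tfrac{k}{p^n}$ for some $k \in \Z$, the tail $x - \overline{x}_n = \sum_{i>n} x_i/p^i$ lies in the interval $[-\tfrac{1}{p^n}, \tfrac{1}{p^n}]$ because every $x_i \in A \subset \langle -p+1, p-1 \rangle$ (geometric series bound). Combining $x \in (\overline{y}_n, \overline{y}_n + \tfrac{1}{p^n})$ with $\overline{x}_n \in \tfrac{1}{p^n}\Z$ then forces $\overline{x}_n \in \{\overline{y}_n, \overline{y}_n + \tfrac{1}{p^n}\}$. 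For the refinements (1.1)--(1.5), I would write $\overline{x}_n - \overline{x}_{n-1} = x_n/p^n$ and $\overline{y}_n - \overline{y}_{n-1} = y_n/p^n$, subtract, and use that $\overline{x}_{n-1}, \overline{y}_{n-1} \in \tfrac{1}{p^{n-1}}\Z$ so their difference is a multiple of $\tfrac{p}{p^n}$; a short case analysis on the sign of $y_n$ and on whether $\overline{x}_n$ equals $\overline{y}_n$ or $\overline{y}_n + \tfrac{1}{p^n}$ pins down $x_n$ modulo $p$, and the constraint $x_n \in \langle -p+1, p-1 \rangle$ leaves exactly the two (or one) stated values. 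Item (1.5) is the degenerate case $y_n = 0$ where only $x_n = 0$ survives.

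For part (2), the key observation is that if $(x,y)$ is a gap then $x$ is the right endpoint of a component and $y$ the left endpoint, so $x$ is the \emph{largest} element of $A_p$ below the gap and $y$ the smallest above it. If the expansion $(x_i)$ of $x$ did not eventually equal $p-1$, there would be a position $n$ with $x_n < p-1$ and $x_n + 1 \in A$, or else we could replace a tail by all $p-1$'s (which is legitimate since $p-1 \in A$) to get a strictly larger point still in $A_p$ and still $\le x$ — contradiction unless $x_i = p-1$ for all large $i$; then the first index $n$ from the right where we cannot increment gives $x_n + 1 \notin A$. The argument for $y$ is symmetric using $-p+1 \in A$. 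Part (2') is then almost immediate: if $w = \overline{v}_k$ with $(v_i) \in A^k$, then $w$ has two expansions — the one ending in all $0$'s (legitimate since $0 \in A$) and, after decrementing the last nonzero digit, one ending in all $p-1$'s (or all $-p+1$'s) — and by part (2) an endpoint of a gap must have an eventually-constant expansion of exactly one of these two flavors with a specific non-extendability at the switch point, which $w$ violates because both tails are available; alternatively, and more cleanly, one shows directly that a neighborhood of $w$ in $A_p$ contains points on both sides of $w$.

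For parts (3) and (4), which concern $(n)$-bi-obtainability, I would proceed by actually constructing an expansion of $x$. If $x$ is $(i)$-bi-obtainable for all $i \ge n$, then for each such $i$ there is $k_i$ with $x \in [\tfrac{k_i}{p^i}, \tfrac{k_i+1}{p^i}]$ and both endpoints attained by length-$i$ sequences from $A$; by Remark \ref{ibo} these endpoints lie in $A_p$, and taking $i \to \infty$ the nested intervals shrink to $x$, so $x \in \overline{A_p} = A_p$ (closedness from Proposition \ref{P1}(i)). Part (4) is the propagation step: assuming the digit-covering hypothesis (every $k \in \langle 0, p-1\rangle$ has $k \in A$ or $k - p \in A$), I would show $(n)$-bi-obtainable implies $(n+1)$-bi-obtainable by taking the length-$n$ witnesses $\overline{y}_n = \tfrac{k}{p^n}$, $\overline{z}_n = \tfrac{k+1}{p^n}$ for the interval containing $x$, locating which half of $[\tfrac{k}{p^n}, \tfrac{k+1}{p^n}]$ (i.e. which subinterval of length $p^{-n-1}$) contains $x$, and extending the relevant witness by one digit: the digit-covering hypothesis is exactly what guarantees that each of the $p$ subintervals of $[\tfrac{k}{p^n}, \tfrac{k+1}{p^n}]$ has both its endpoints reachable by appending an allowed digit (using $0 \in A$ and $p-1 \in A$ at the two ends, and $j \in A$ or $j - p \in A$ with a carry in the middle). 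Iterating gives $(i)$-bi-obtainability for all $i \ge n$.

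I expect the main obstacle to be the bookkeeping in part (2) and in the carry analysis of part (4): getting the "first index from the right at which incrementing fails" argument airtight requires carefully ruling out that the expansion is \emph{not} eventually $p-1$ yet $x$ is still maximal, and in (4) one must track how appending a digit to a length-$n$ witness interacts with the choice of subinterval, including the carry case where the needed digit $j$ is not in $A$ but $j - p$ is, so one instead increments the $n$-th digit of the \emph{other} witness. The sign case analysis in (1.1)--(1.5) is routine but tedious and must be done exhaustively.
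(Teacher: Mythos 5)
Your plan tracks the paper's proof almost step for step: part (1) via the tail bound $|x-\overline{x}_n|\leq\frac{1}{p^n}$ together with the fact that $\overline{x}_n$ and $\overline{y}_n$ are multiples of $\frac{1}{p^n}$ (Observation \ref{wiel}); part (3) via Remark \ref{ibo} and the closedness of $A_p$; and part (4) by the same one-step propagation from $(n)$- to $(n+1)$-bi-obtainability with the same carry dichotomy (append $k'$ to the witness of the left endpoint when $k'\in A$, otherwise append $k'-p$ to the witness of the right endpoint). Your derivation of (1.1)--(1.5) from $x_n\equiv y_n$ or $x_n\equiv y_n+1 \pmod p$ intersected with $\langle -p+1,p-1\rangle$ is a legitimate, and arguably tidier, alternative to the paper's reduction to level $n-1$.

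Two spots need repair. In part (2), the sentence describing the second conclusion is essentially circular (``the first index from the right where we cannot increment gives $x_n+1\notin A$'' assumes what is to be proved), and ``a strictly larger point still in $A_p$ and still $\le x$'' is incoherent -- the point must land in $(x,y)$. More substantively, you omit the one nontrivial construction: once $x_i=p-1$ for $i>n$ and $x_n<p-1$, we have $x=\overline{x}_{n-1}+\frac{x_n+1}{p^n}$, so replacing $x_n$ by $x_n+1$ and zeroing the tail reproduces $x$ itself; to derive a contradiction from $x_n+1\in A$ you must additionally place a single digit $p-1$ at a position $k$ late enough that the resulting point lies strictly between $x$ and $y$, which is exactly what the paper does. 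Separately, your first route to (2') -- decrementing the last nonzero digit of $(v_i)$ to get an expansion ending in $p-1$'s -- can fail, because the decremented digit need not belong to $A$; this is harmless, since the all-zeros tail alone already contradicts (2) (it is neither eventually $p-1$ nor eventually $-p+1$), and your alternative of approximating $w$ from both sides by $w+\sum_{i>m}\frac{p-1}{p^i}$ and $w+\sum_{i>m}\frac{-p+1}{p^i}$ also works.
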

\begin{proof}
Ad (1) By the assumption, we have $0 < x - \overline{y}_n < \frac{1}{p^n}.$ Observe that 
$$|x-\overline{x}_n| = \left\vert \sum_{i=n+1}^{\infty} \frac{x_i}{p^i}\right\vert \leq \frac{p-1}{p^{n+1}(1-\frac{1}{p})}= \frac{1}{p^{n}},$$ 
and so $-\frac{1}{p^n} \leq \overline{x}_n - x \leq \frac{1}{p^n}$. Adding the obtained inequalities, we receive $-\frac{1}{p^n} < \overline{x}_n - \overline{y}_n < \frac{2}{p^n}$. By Observation \ref{wiel}, we have $\overline{x}_n - \overline{y}_n = 0$ or $\overline{x}_n-\overline{y}_n= \frac{1}{p^n}$.

Ad (1.1) Since $y_n > 0$, we have $\overline{y}_n > \overline{y}_{n-1}$. Moreover, $$\overline{y}_n + \frac{1}{p^n} = \overline{y}_{n-1} + \frac{y_n+1}{p^n} \leq \overline{y}_{n-1} + \frac{p}{p^{n}} = \overline{y}_{n-1} + \frac{1}{p^{n-1}}.$$ Hence $x \in (\overline{y}_{n-1}, \overline{y}_{n-1} + \frac{1}{p^{n-1}}).$ By (1), $\overline{x}_{n-1} = \overline{y}_{n-1}$ or $\overline{x}_{n-1} = \overline{y}_{n-1} + \frac{1}{p^{n-1}}.$
In the first case, we have $x_n = y_n$, and in the second case, $\frac{x_n}{p^n} = \frac{y_n}{p^n} - \frac{1}{p^{n-1}}$, so $x_n = y_n-p$. 

Ad (1.2) Since $y_n < 0$, we have $\overline{y}_n < \overline{y}_{n-1}$, and so $\overline{y}_n + \frac{1}{p^n} \leq \overline{y}_{n-1}.$ Moreover, $$\overline{y}_n = \overline{y}_{n-1} + \frac{y_n}{p^n} \geq \overline{y}_{n-1} - \frac{p}{p^{n}} = \overline{y}_{n-1} - \frac{1}{p^{n-1}}.$$ Hence $x \in (\overline{y}_{n-1} - \frac{1}{p^{n-1}}, \overline{y}_{n-1}).$ By Observation \ref{wiel}, there is a sequence $(y'_i)\in \langle -p, p-1 \rangle^{n-1}$ such that $\overline{y'}_{n-1}=\overline{y}_{n-1} - \frac{1}{p^{n-1}}$. By (1), $$\overline{x}_{n-1} = \overline{y'}_{n-1} + \frac{1}{p^{n-1}} = \overline{y}_{n-1}$$ or $$\overline{x}_{n-1} = \overline{y'}_{n-1} = \overline{y}_{n-1} - \frac{1}{p^{n-1}}.$$
In the first case, we have $x_n = y_n$, and in the second case, $\frac{x_n}{p^n} = \frac{y_n}{p^n} + \frac{1}{p^{n-1}}$, so $x_n = y_n+p$. 

The proofs of (1.3) and (1.4) are similar.

Ad (1.5) We will show that $\overline{x}_{n-1} = \overline{y}_{n-1}.$ Indeed, if $|\overline{x}_{n-1} - \overline{y}_{n-1}| > 0$, then, by Observation \ref{wiel}, $|\overline{x}_{n-1} - \overline{y}_{n-1}| \geq \frac{1}{p^{n-1}},$ and hence $$\left\vert\overline{x}_{n} -\overline{y}_{n}\right\vert =\left\vert\overline{x}_{n-1} - \overline{y}_{n-1} + \frac{x_n}{p^n} - \frac{y_n}{p^n}\right\vert \geq \left\vert\overline{x}_{n-1} - \overline{y}_{n-1}\right\vert - \left\vert\frac{x_n}{p^n} - \frac{y_n}{p^n}\right\vert \geq \frac{1}{p^{n-1}} - \frac{p-1}{p^n} = \frac{1}{p^n} > 0.$$ So, $\overline{x}_{n-1} = \overline{y}_{n-1},$ and thus $x_n = y_n$.

Ad (2) First, we will show that there is $n \in \N$ such that $x_i = p-1$ for any $i > n$. On the contrary, assume that for any $k \in \N$ there is $j > k$ such that $x_j < p-1$. Let $k\in \N$ be such that $\frac{2}{p^k} < y-x$ and let $j > k$ be such that $x_j < p-1$. Let $z_i = x_i$ for $i \neq j$, and $z_j = p-1$. Since $p-1 \in A$ and $x \in A_p$, we have $z = \sum_{i=1}^{\infty} \frac{z_i}{p^i} \in A_p$. Moreover, 
$$z = x + \frac{p-1-x_j}{p^j} \leq x + \frac{2(p-1)}{p^j} < x + \frac{2}{p^{j-1}} \leq  x + \frac{2}{p^{k}}< y.$$ 
So, $z \in A_p \cap(x,y),$ a contradiction. Of course, $x < 1$, so there is $n \in \N$ such that $x_n < p-1$ and $x_i = p-1$ for $i > n$. 

Now, we will show that $x_n+1 \notin A$.
On the contrary, assume that $x_n+1 \in A$. We have $$x =  \sum_{i=1}^{\infty}\frac{x_i}{p^i} = \overline{x}_{n}+\sum_{i=n+1}^{\infty}\frac{p-1}{p^i}=\overline{x}_{n} + \frac{1}{p^n}.$$ Let $k > n$ be such that $\frac{1}{p^{k-1}} < y-x$. Let $w_i = x_i$ for $i <n$, $w_n = x_{n}+1$, $w_k = p-1$ and $w_i = 0$ for the remaining $i$. Then $w  =\sum_{i=1}^{\infty}\frac{w_i}{p^i} \in A_p$ and $$w = \overline{x}_{n} + \frac{1}{p^n} + \frac{p-1}{p^k} < x + \frac{1}{p^{k-1}} < y,$$ a contradiction. Thus, $x_n+1 \notin A$. The proof for the sequence $(y_i)$ is analogous.

Ad (2') Immediately follows from (2).

Ad (3) Using the definition of $(i)$-bi-obtainability and Remark \ref{ibo}, we can find $k_i \in \langle -p^i, p^i-1 \rangle$ such that $x \in [\frac{k_i}{p^i},\frac{k_i}{p^i}+\frac{1}{p^i}]$ and $\frac{k_i}{p^i} \in A_p$ for any $i \geq n$. Then $|x-\frac{k_i}{p^i}| \leq \frac{1}{p^i}$, so $\lim\limits_{i \to \infty} \frac{k_i}{p^i} = x$. Since $A_p$ is closed, then $x \in A_p$.

Ad (4) It suffices to show that if $x$ is $(n)$-bi-obtainable, then it is $(n+1)$-bi-obtainable. 

First, suppose that $x = \frac{k}{p^n}$ for some $k \in \langle -p^{n}+1, p^{n}-1 \rangle$ (of course, $1$ and $-1$ cannot be $(n)$-bi-obtainable). Then, from the definition of $(n)$-bi-obtainability it follows that $\frac{k}{p^n} \in A_p$ and at least one of $\frac{k}{p^n} + \frac{1}{p^n}$ or $\frac{k}{p^n} - \frac{1}{p^n}$ belongs to $A_p$. Assume that $\frac{k}{p^n} + \frac{1}{p^n} \in A_p$ (the proof, when $\frac{k}{p^n} - \frac{1}{p^n} \in A_p$ is similar). Let $(y_i),(z_i) \in A^{n}$ be such that $\overline{y}_{n} = \frac{k}{p^n}$ and $\overline{z}_{n} = \frac{k}{p^n} + \frac{1}{p^n}$. We have $x \in [\frac{kp}{p^{n+1}},\frac{kp}{p^{n+1}}+\frac{1}{p^{n+1}}]$ and $\frac{kp}{p^{n+1}} = \frac{k}{p^n} \in A_p$.  We need to find a sequence $(w_i) \in \langle -p, p-1 \rangle ^{n+1}$ such that $\overline{w}_{n+1} = \frac{k}{p^{n}}+\frac{1}{p^{n+1}}$. 
Consider the cases.

$1^{\mbox{o}}$  $1 \in A$. Put $w_i = y_i$ for $i \leq n$ and $w_{n+1} = 1$. Then $(w_i) \in A^{n+1}$ and $\overline{w}_{n+1} = \frac{k}{p^{n}}+\frac{1}{p^{n+1}}$.

$2^{\mbox{o}}$  $1 \notin A$. By the assumption, we have $1-p \in A$. Put $w_i = z_i$ for $i \leq n$ and $w_{n+1} = 1 - p $. Then $(w_i) \in A^{n+1}$ and $$\overline{w}_{n+1} = \frac{k}{p^{n}}+\frac{1}{p^{n}} + \frac{1-p}{p^{n+1}} =\frac{k}{p^{n}} + \frac{1}{p^{n+1}}.$$

Therefore, $x$ is $(n+1)$-bi-obtainable.

Now, assume that $x \in (\frac{k}{p^{n}},\frac{k}{p^{n}}+\frac{1}{p^{n}})$ for some $k\in \langle -p^{n}, p^{n}-1 \rangle$. Then there is $k' \in \langle 0, p-1\rangle$ such that $x \in [\frac{kp+k'}{p^{n+1}},\frac{kp+k'}{p^{n+1}} +\frac{1}{p^{n+1}}]$.   
Since $x$ is $(n)$-bi-obtainable, there exist $(y_i),(z_i) \in A^{n}$ such that $\overline{y}_{n} = \frac{k}{p^{n}}$ and $\overline{z}_{n} = \frac{k}{p^{n}} + \frac{1}{p^n}$.
We will find a sequence $(v_i) \in A^{n+1}$ such that $\overline{v}_{n+1} = \frac{kp+k'}{p^{n+1}}$.
Consider the cases.

$1^{\mbox{o}}$  $k' \in A$. Put $v_i = y_i$ for $i \leq n$ and $v_{n+1} = k'$. Then $(v_i) \in A^{n+1}$ and $\overline{v}_{n+1} = \frac{kp+k'}{p^{n+1}}$.

$2^{\mbox{o}}$  $k' \notin A$. Since $k' \geq 0,$ by the assumption, we have $k'-p \in A$. Put $v_i = z_i$ for $i \leq n$ and $v_{n+1} = k' - p $. Then $(v_i) \in A^{n+1}$ and $$\overline{v}_{n+1} = \frac{k}{p^{n}} + \frac{1}{p^n} + \frac{k'-p}{p^{n+1}} = \frac{kp+k'}{p^{n+1}}.$$

Now, we will find a sequence $(u_i) \in A^{n+1}$ such that $\overline{u}_{n+1} = \frac{kp+k'}{p^{n+1}} + \frac{1}{p^{n+1}}$.
Consider the cases.

$1^{\mbox{o}}$  $k' + 1 \in A$. Then put $u_i = y_i$ for $i \leq n$ and $u_{n+1} = k' + 1$. Then $(u_i) \in A^{n+1}$ and $\overline{u}_{n+1} = \frac{kp+k'}{p^{n+1}} + \frac{1}{p^{n+1}}$.

$2^{\mbox{o}}$  $k' + 1 \notin A$. If $k' < p-1$, then $k' +1 \in \langle 1, p-1 \rangle$ and, by the assumption, $k'+1-p \in A$. If $k' = p-1$, then $k'+1-p = 0 \in A$. In both cases put $u_i = z_i$ for $i \leq n$ and $u_{n+1} = k' +1 - p $. Then $(u_i) \in A^{n+1}$ and $$\overline{u}_{n+1} =  \frac{k}{p^{n}}+ \frac{1}{p^n} + \frac{k'-p+1}{p^{n+1}} = \frac{kp+k'}{p^{n+1}}+ \frac{1}{p^{n+1}}.$$

Therefore, $x$ is $(n+1)$-bi-obtainable.
\end{proof}

Let $p \in \N, p \geq 2$ and $A \subsetneq \Z(p)$ has more than $1$ element. In Proposition \ref{P1} (iv) it was pointed out that the set $A_p$ is a Cantor set. Every such a set will be called a $p$-Cantor set.
In the class of $p$-Cantor sets we will distinguish some special subclass. Let $l, r, p \in \N$, $p > 2$, $l + r < p$. We will consider Cantor sets of the form
$C(l, r, p) : = A(l, r, p)_p$
where
$$A(l, r, p): = \langle 0, l - 1\rangle \cup \langle p-r, p - 1\rangle .$$
We call such a set a special $p$-Cantor set or, in short, an S-Cantor set. This will not lead to any misunderstandings because $p$ will be always established.
A set $C(l, r, p)$ has the following construction. In the first step, we divide $[0,1]$ into $p$ subintervals with equal lengths and we enumerate them (starting from $0$). Then we remove $p-l-r$ consecutive intervals starting from the one with number $l$. So, there remains $l$ intervals on the left and $r$ intervals on the right of the emergent gap. We continue this procedure for the remaining intervals. Note that, S-Cantor sets do not have to be symmetric but are always self-similar. However, we will also examine symmetric S-Cantor sets.

Let us introduce some more notation. If $A \subset \Z, A \neq \emptyset,$ then we write
$$diam(A) := \sup\{|a - b|: a, b \in A\},$$
$$\Delta(A) := \sup\{b-a: a, b \in A, a<b, (a, b) \cap A = \emptyset\},$$
$$I(A):= \frac{\Delta(A)}{\Delta(A)+ diam(A)}.$$

\begin{theorem} \label{BBFS} \cite{BBFS}
Let $A \subset \Z$ be a nonempty finite set, $p \in \N$, $p \geq 2$. Then $A_p$ is an interval if and only if $\frac{1}{p} \geq I(A)$.
\end{theorem}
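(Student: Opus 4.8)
The plan is to normalize the problem first. By Proposition~\ref{dod}, translating $A$ by an integer $c$ merely shifts $A_p$ by $\tfrac{c}{p-1}$, affecting neither the property of $A_p$ being an interval nor the numbers $\Delta(A)$ and $diam(A)$; so I would assume $\min A = 0$ and write $d := diam(A) = \max A$ and $\Delta := \Delta(A)$, dispatching at once the trivial case $|A|=1$. Then every element of $A_p$ lies in $[0,\tfrac{d}{p-1}]$ and both endpoints of this interval belong to $A_p$ (the constant digit sequences $0$ and $d$), so ``$A_p$ is an interval'' is equivalent to ``$A_p = [0,\tfrac{d}{p-1}]$''. Clearing denominators in $\tfrac1p \ge I(A) = \tfrac{\Delta}{\Delta+d}$ rewrites the hypothesis as $d \ge (p-1)\Delta$. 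The one structural fact I would use throughout is the self-similarity $A_p = \bigcup_{a\in A}\bigl(\tfrac ap + \tfrac1p A_p\bigr)$, which yields the covering inclusion $A_p \subseteq \bigcup_{a\in A}\bigl[\tfrac ap,\ \tfrac ap + \tfrac{d}{p(p-1)}\bigr]$.

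For the implication $d\ge (p-1)\Delta \Rightarrow A_p = [0,\tfrac{d}{p-1}]$, I would run a greedy digit-selection argument. Fix $x\in[0,\tfrac{d}{p-1}]$, put $r_0 := x$, and, given a remainder $r\in[0,\tfrac{d}{p-1}]$, choose a digit $a\in A$ with $pr - a \in [0,\tfrac{d}{p-1}]$ as follows: if $pr\le d$, take $a := \max\bigl(A\cap[0,pr]\bigr)$, using that two consecutive elements of $A$ differ by at most $\Delta\le\tfrac{d}{p-1}$; if $pr>d$, take $a := d$, using $pr\le\tfrac{pd}{p-1}$. Iterating produces a sequence $(x_i)\in A^{\N}$ with $x = \sum_{i=1}^n\tfrac{x_i}{p^i} + \tfrac{r_n}{p^n}$ and $0\le r_n\le\tfrac{d}{p-1}$, whence $x=\sum_{i=1}^\infty\tfrac{x_i}{p^i}\in A_p$.

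For the converse, assume $d < (p-1)\Delta$ and pick consecutive elements $a<a'$ of $A$ with $a'-a = \Delta$. Then $\tfrac ap + \tfrac{d}{p(p-1)} < \tfrac{a'}p$, so $J := \bigl(\tfrac ap + \tfrac{d}{p(p-1)},\ \tfrac{a'}p\bigr)$ is a nonempty open interval disjoint from all the covering intervals above: those indexed by $b\le a$ end at or before $\inf J$, and those indexed by $b\ge a'$ start at or after $\sup J$. Hence $A_p\cap J=\emptyset$. On the other hand $\tfrac{a}{p-1}\in A_p$ lies strictly to the left of $J$ and $\tfrac{a'}{p-1}\in A_p$ strictly to the right (here $0\le a<d$ and $a'>0$ give the strict inequalities), so $A_p$ is disconnected, hence not an interval.

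The only genuinely delicate point is the verification in the ``if'' direction that the greedy digit can always be chosen so that the new remainder stays in $[0,\tfrac{d}{p-1}]$; the boundary cases $pr = d$ and $pr$ near $\tfrac{pd}{p-1}$ must be checked separately. Everything else is routine bookkeeping with geometric series.
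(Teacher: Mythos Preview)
The paper does not prove this theorem; it is quoted verbatim from \cite{BBFS} and used as a black box (only its Corollary~\ref{wBBFS} is invoked later). So there is no ``paper's own proof'' to compare against.

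Your argument is correct and self-contained. The normalization to $\min A=0$ is harmless for the reasons you give. The forward direction via the greedy digit choice is the natural proof: your two-case analysis (either $pr\le d$, take the largest element of $A$ not exceeding $pr$; or $pr>d$, take $d$) does keep the remainder in $[0,\tfrac{d}{p-1}]$, because consecutive elements of $A$ differ by at most $\Delta\le \tfrac{d}{p-1}$ in the first case and $pr\le \tfrac{pd}{p-1}$ gives $pr-d\le\tfrac{d}{p-1}$ in the second. For the converse, the self-similar covering $A_p\subseteq\bigcup_{b\in A}[\tfrac bp,\tfrac bp+\tfrac{d}{p(p-1)}]$ immediately exhibits the gap $J$ once $d<(p-1)\Delta$, and the constant-digit points $\tfrac{a}{p-1}$ and $\tfrac{a'}{p-1}$ witness disconnection (the strict inequalities $a<d$ and $a'>0$ follow from $a<a'$, $0\le a$, $a'\le d$).

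Two cosmetic remarks. First, in the converse you could alternatively use the endpoints $\tfrac{a}{p}+\tfrac{d}{p(p-1)}\in A_p$ and $\tfrac{a'}{p}\in A_p$ directly, which sit exactly at the boundary of $J$; this avoids the extra inequality check. Second, the singleton case $|A|=1$ is genuinely outside the scope of the statement as written, since $I(A)$ is then undefined; saying you ``dispatch'' it is fine, but strictly speaking the theorem should be read with $|A|\ge 2$.
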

\begin{remark}
The assertion of the above theorem is equivalent to: $A_p$ is an interval if and only if $p \leq 1 + \frac{diam(A)}{\Delta(A)}$.
\end{remark}
\begin{corollary} \label{wBBFS}
Let $p \in \N,$ $p > 2,$ $A, B \subset \Z(p)$, $0, p - 1 \in A$ and $0, p - 1 \in B.$ Then
$A_p - B_p = [-1, 1]$ if and only if $\Delta(A - B) \leq 2.$
\end{corollary}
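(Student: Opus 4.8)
The plan is to reduce the statement to Theorem~\ref{BBFS} by way of Proposition~\ref{dod}(ii). Set $C := A - B$; this is a nonempty finite subset of $\Z$. Since $0 \in A \cap B$, $p-1 \in A$ and $p-1 \in B$, we get $0 = 0-0 \in C$, $p-1 = (p-1)-0 \in C$ and $-(p-1) = 0-(p-1) \in C$; on the other hand $C = A - B \subseteq \Z(p) - \Z(p) = \langle -p+1, p-1 \rangle$. Hence $\min C = -(p-1)$, $\max C = p-1$, so $diam(C) = 2(p-1)$, and $\Delta(C) \geq 1$ (for $p>2$ the set $C$ has at least three elements). Moreover, by Proposition~\ref{dod}(ii), $A_p - B_p = (A-B)_p = C_p$; since $A, B \subseteq \Z(p)$ we have $C_p \subseteq [-1,1]$, while $p-1 \in C$ gives $1 = \sum_{i=1}^{\infty}\frac{p-1}{p^i} \in C_p$ and $-(p-1) \in C$ gives $-1 \in C_p$.

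Next I would apply Theorem~\ref{BBFS} in both directions. By the Remark following it, $C_p$ is an interval if and only if $p \leq 1 + \frac{diam(C)}{\Delta(C)} = 1 + \frac{2(p-1)}{\Delta(C)}$. Since $p-1 > 0$, rearranging shows this is equivalent to $\Delta(C) \leq 2$. Thus $C_p$ is an interval if and only if $\Delta(A-B) \leq 2$.

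It then remains to identify the interval. If $\Delta(A-B) \leq 2$, then $C_p$ is an interval with $C_p \subseteq [-1,1]$ and $-1, 1 \in C_p$, so necessarily $C_p = [-1,1]$, i.e. $A_p - B_p = [-1,1]$. Conversely, if $A_p - B_p = [-1,1]$, then $C_p = [-1,1]$ is an interval, so by the equivalence above $\Delta(A-B) \leq 2$.

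The proof is essentially bookkeeping, and I do not expect a genuine obstacle. The only two points that need a little care are the exact computation $diam(A-B) = 2(p-1)$, which uses $0, p-1 \in A$ and $0, p-1 \in B$ together with $A, B \subseteq \Z(p)$, and the passage from ``$C_p$ is an interval'' to ``$C_p = [-1,1]$'', which is precisely where the memberships $\pm 1 \in C_p$ are used.
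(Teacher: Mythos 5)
Your proposal is correct and follows essentially the same route as the paper: apply Proposition~\ref{dod}(ii), compute $diam(A-B)=2p-2$, and rearrange the inequality from the remark after Theorem~\ref{BBFS} to get the equivalence with $\Delta(A-B)\leq 2$. The only difference is that you spell out the (easy) step of identifying the resulting interval as $[-1,1]$ via $\pm 1\in (A-B)_p$ and $(A-B)_p\subseteq[-1,1]$, which the paper leaves implicit.
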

\begin{proof}
It is easily seen that $diam(A- B) = 2p - 2.$ Hence the inequality $p \leq 1+\frac{diam(A-B)}{\Delta(A-B)}$ is equivalent to $p-1 \leq \frac{2p-2}{\Delta(A-B)},$ and so to $\Delta(A - B) \leq 2.$ 
\end{proof}
Using the above corollary and Lemma \ref{Lpom}, we can prove the main theorem of this section, which gives a full characterization of the sets of the form $C(l_1,r_1, p) - C(l_2, r_2, p)$. Some inequalities with parameters $p,l_1,r_1,l_2,r_2$ will play the key role in this theorem. We will assume that these numbers are natural such that $p > 2$, $l_1+r_1 < p$, $l_2+r_2 < p$. Consider the following conditions:
\begin{equation*}
(S1)\,\,\, l_1+l_2+r_2 \geq p \;\text{ or }\; l_1+r_1+r_2 \geq p;
\end{equation*}%
\begin{equation*}
(S2)\,\,\, l_1+r_1+l_2 \geq p \;\text{ or }\; r_1+l_2+r_2 \geq p;
\end{equation*}%.
\begin{equation*}
(S3)\,\,\, l_1+r_1+l_2+r_2 \leq p;
\end{equation*}%
\begin{equation*}
(S1^* )\,\,\, l_1+l_2+r_2 > p \;\text{ or }\; l_1+r_1+r_2 > p;
\end{equation*}%
\begin{equation*}
(S2^*)\,\,\, l_1+r_1+l_2 > p \;\text{ or }\; r_1+l_2+r_2 > p.
\end{equation*}
Obviously $(S1)$ follows from $(S1^*)$ and $(S2)$ from $(S2^*)$. Moreover, if $(S3)$ holds, then $(S1)$ and $(S2)$ do not hold (so $(S1^*)$ and $(S2^*)$ do not hold as well). 

In the theorem we will consider the following combinations of conditions:
\begin{itemize}
\item[(1)] $(S1) \wedge (S2)$

\item[(2)] $(S3)$

\item[(3)] $(S1^*) \wedge \neg(S2)$

\item[(4)] $(S2^*) \wedge \neg(S1)$

\item[(5)] $\neg(S1^*) \wedge \neg(S2^*) \wedge \neg(S3) \wedge \neg((S1)\wedge (S2)).$
\end{itemize}
Denoting by $A_i$, $i = 1, \dots,5,$ the sets of admissible parameters determined
by the conditions (1), (2), $\dots$, (5), respectively, it is easy to see that the sets $A_i$,
$i = 1, \dots , 4$, are pairwise disjoint and that $A_5$ is the complement of their union. So, for any parameters exactly one of the conditions (1)--(5) holds.

\begin{remark} \label{min}
Observe that, when we consider the conditions $(S1),(S1^*),(S2),(S2^*),(S3)$ for the set
$$C(l_2,r_2,p)-C(l_1,r_1,p) = -( C(l_1,r_1,p)-C(l_2,r_2,p)),$$
we need to swap the indices "$1$" and "$2$" in the original inequalities. This means that conditions $(S1),(S1^*),(S2),(S2^*)$ for the set $C(l_2,r_2,p)-C(l_1,r_1,p)$ are the same as conditions $(S2),(S2^*),(S1),$ and $(S1^*)$, respectively, for the set $C(l_1,r_1,p)-C(l_2,r_2,p)$, while condition $(S3)$ does not change. In particular, $(4)$ holds for $C(l_1,r_1,p)-C(l_2,r_2,p)$ if and only if $(3)$ holds for $C(l_2,r_2,p)-C(l_1,r_1,p)$.
\end{remark}
\begin{remark} \label{idea}
The proof of the next theorem is quite technical. That is why we would like to present first a geometric idea of the proof. 
As will be shown in the proof, the set $C(l_1,r_1,p)-C(l_2,r_2,p)$ can be presented as $D_p$ for some set $D = \langle -p+1, p-1 \rangle \setminus (L \cup R)$ where $L = \langle a,b \rangle,$ $R = \langle c, d\rangle,$ $a,b \in \{-p+2, \dots, -1\},$ $c,d \in \{ 1, \dots, p-2\}$. We will briefly describe the construction of the set $D_p$ without going into details. In the first step of the construction, we divide the interval $[-1,1]$ into $2p$ intervals with the same length (equal to $\frac{1}{p}$). Endpoints of these intervals are of the form $\frac{i}{p}$ for $i \in \langle -p ,p \rangle$. We delete the intervals $(\frac{i-1}{p},\frac{i}{p}]$ for $i\in L$ and the intervals $[\frac{i}{p},\frac{i+1}{p})$ for $i\in R$. Denote the remaining set by $F_1$, that is, $$F_1 = [-1,1] \setminus \left(\bigcup_{i\in L} \left(\frac{i-1}{p},\frac{i}{p}\right] \cup \bigcup_{i \in R} \left[\frac{i}{p},\frac{i+1}{p}\right)\right).$$ In the second step of the construction, we repeat this procedure for every closed interval of the form $[\frac{i-1}{p},\frac{i+1}{p}]$ where $i \in D$. We denote the obtained set (which is a copy of $F_1$ with center at $i$ and the diameter equal to $\frac{2}{p}$) by $F^i_2$. Then we put $F_2 := \bigcup_{i\in D} F^i_2$. Note that some gaps that appeared in the first step of the construction could be covered in the next steps. Generally, for any $n \in \N$ we divide all of the intervals of the form $[x-\frac{1}{p^n},x+\frac{1}{p^n}]$, where $x=\sum_{i=1}^n \frac{x_i}{p^i}$, $x_i \in D$ for $i \in \{1,\dots,n\}$, into $2p$ intervals with the same length (equal to $\frac{1}{p^{n+1}}$). Then we delete the intervals $(x +\frac{i-1}{p^{n+1}},x+\frac{i}{p^{n+1}}]$ for $i\in L$ and the intervals $[x+\frac{i}{p^{n+1}},x+\frac{i+1}{p^{n+1}})$ for $i\in R$. We denote the obtained set by $F^x_{n+1}$, so
$$F_{n+1}^x = \left[x-\frac{1}{p^n},x+\frac{1}{p^n}\right] \setminus \left(\bigcup_{i\in L} \left(x +\frac{i-1}{p^{n+1}},x+\frac{i}{p^{n+1}}\right] \cup \bigcup_{i \in R} \left[x+\frac{i}{p^{n+1}},x+\frac{i+1}{p^{n+1}}\right)\right).$$ The union of all received sets in that way is denoted by $F_{n+1}$. At the end, we obtain $D_p = \bigcap_{n\in\N} \bigcup_{j\geq n} F_j$. 

The picture below shows two first steps of the construction of the set $\{-4,0,2,3,4\}_5$. The first set is $F_1$. In the next three rows we present all copies of $F_1$ obtained in the second step of the construction. The final picture is the set $F_2$. A line at the end of an interval means that the endpoint of this interval belongs to the set.
\begin{figure}[h]
\caption{Two first steps of the construction of the set $\{-4,0,2,3,4\}_5$}
\includegraphics[width=1\textwidth]{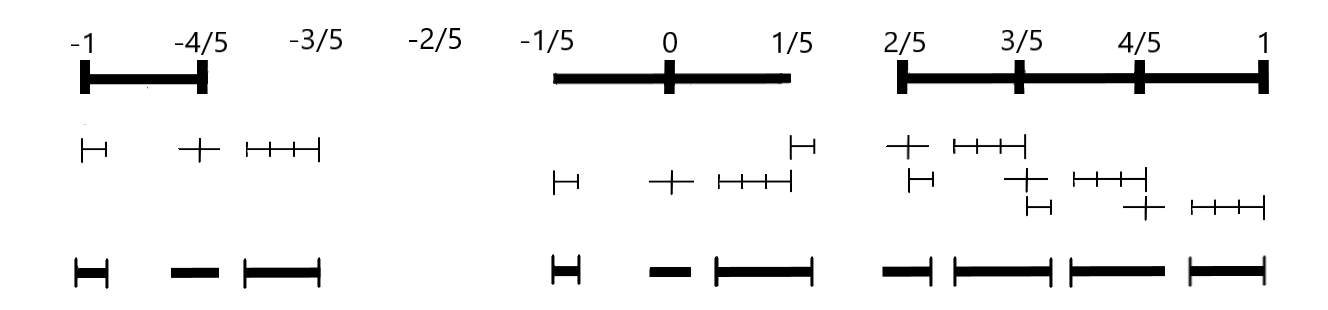}
\end{figure}

Observe that if $|L|,|R| \leq 1$ (which holds if $(S1)$ and $(S2)$ are satisfied), then the gaps that appear in one step are covered by the intervals obtained in the next steps. That is why $D_p = [-1,1]$. In the proof, we will also show that if $R\cap (p+L) \neq \emptyset$ (which is implied by $(S3)$), then in every interval from the construction, we will find a gap, which implies that $D_p$ is a Cantor set. If $|L| = 0$ (which holds if $(S1^*)$ is satisfied) and $|R| > 1$, then gaps will only appear near $1$ and near the left endpoints of other gaps. So, then we obtain an L-Cantorval. Similarly, we receive an R-Cantorval. Finally, if $|L| > 0$, $|R| > 0$, $R\cap (p+L) = \emptyset$ and  $|L| > 1$ or $|R| >1$, then gaps will only appear near $0,1$ and near both endpoints of other gaps. Hence we obtain an M-Cantorval. All details are included in the proof of the next theorem.
\end{remark}
\begin{theorem} \label{twR}
Let $l_1,r_1,l_2,r_2,p \in \N$, $p > 2$, $l_1+r_1 < p$, $l_2+r_2 < p$.

\begin{itemize}
\item[(1)] $C(l_1,r_1, p) - C(l_2, r_2, p) = [-1, 1]$
if and only if $(S1)$ and $(S2)$ hold.

\item[(2)] $C(l_1,r_1, p) - C(l_2, r_2, p)$ is a Cantor set if and only if
$(S3)$ holds.

\item[(3)] $C(l_1,r_1, p) - C(l_2, r_2, p)$ is an L-Cantorval if and only if $(S1^* )$ holds, but $(S2)$ does not hold.

\item[(4)] $C(l_1,r_1, p) - C(l_2, r_2, p)$ is an R-Cantorval if and only if $(S2^* )$ holds, but $(S1)$ does not hold.

\item[(5)] $C(l_1,r_1, p) - C(l_2, r_2, p)$ is an M-Cantorval if and only if $(S1^*)$, $(S2^*)$, $(S3)$ do not hold and at least one of $(S1)$ or $(S2)$ also does not hold.
\end{itemize}
\end{theorem}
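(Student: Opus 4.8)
The plan is to reduce everything to the geometric picture sketched in Remark~\ref{idea}, and then analyze the set $D=\langle -p+1,p-1\rangle\setminus(L\cup R)$ with $L=\langle a,b\rangle$, $R=\langle c,d\rangle$. First I would make the translation precise: by Proposition~\ref{dod}(ii),
$$C(l_1,r_1,p)-C(l_2,r_2,p)=\bigl(A(l_1,r_1,p)-A(l_2,r_2,p)\bigr)_p=D_p,$$
so I would compute $D=A(l_1,r_1,p)-A(l_2,r_2,p)$ explicitly. Writing $A(l_i,r_i,p)=\langle 0,l_i-1\rangle\cup\langle p-r_i,p-1\rangle$, a direct check shows $D$ always contains $\langle -p+1,p-1\rangle$ minus two ``missing blocks'': a left block $L$ of integers around some negative value and a right block $R$ around some positive value, and I would record $\min R,\max R,\min L,\max L$ in terms of $l_1,r_1,l_2,r_2,p$. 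The crucial dictionary is then: $|L|\le 1\iff (S2)$; $|R|\le 1\iff (S1)$; $L=\emptyset\iff (S2^*)$; $R=\emptyset\iff (S1^*)$; and the ``overlap'' condition $R\cap(p+L)\ne\emptyset$ (equivalently $p-1\notin D$, i.e.\ a gap at scale one is never refilled) $\iff (S3)$. Verifying this dictionary is mostly bookkeeping but must be done carefully, including the boundary cases where a block degenerates or abuts $0$ or $\pm(p-1)$.

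Once the dictionary is in place, part~(1) follows from Corollary~\ref{wBBFS}: $D_p=[-1,1]$ iff $\Delta(D)\le 2$, and since each of $L,R$ is a block of consecutive integers, $\Delta(D)\le 2$ iff $|L|\le 1$ and $|R|\le 1$, i.e.\ iff $(S1)\wedge(S2)$. For part~(2), I would use Lemma~\ref{Lpom}(2): to show $D_p$ is a Cantor set it suffices (by Proposition~\ref{P1}) to show it is nowhere dense, and the point is that under $(S3)$ the overlap $R\cap(p+L)\ne\emptyset$ means that inside every basic interval $[x-\tfrac1{p^n},x+\tfrac1{p^n}]$ (with $x=\bar x_n$, $x_i\in D$) the first-step deletion already removes an open subinterval that is never recovered at later stages; conversely if $(S3)$ fails then either some basic interval is refilled to a whole interval, or (using part~(1)) the whole set is an interval, so $D_p$ is not nowhere dense. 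Here Lemma~\ref{Lpom}(1), (1.1)--(1.5) is exactly the tool to show that a deleted interval stays deleted — an element of $D_p$ lying in $(\bar y_n,\bar y_n+\tfrac1{p^n})$ must have $\bar x_n\in\{\bar y_n,\bar y_n+\tfrac1{p^n}\}$, which pins down its digits and lets me check membership against the forbidden blocks.

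For parts~(3), (4), (5), I would argue in terms of endpoints of gaps of $D_p$, using Lemma~\ref{Lpom}(2), (2$'$): an endpoint of a gap has a $p$-adic expansion with digits eventually $p-1$ (left end) or eventually $-p+1$ (right end), and no endpoint is of the form $\bar v_k$. The qualitative behavior is: new gaps at scale $n+1$ appear only at the \emph{right} end of a basic interval when $R\ne\emptyset$ (this creates a gap whose \emph{left} endpoint is accumulated by proper components, since to the left of it sits a full copy of $F_1$-type structure) and only at the \emph{left} end when $L\ne\emptyset$. Combined with the observation that proper components cluster at a gap endpoint exactly on the side where $D$ ``continues'' ($|L|>1$ gives clustering at left endpoints of gaps near $1$... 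I should be careful and track this precisely), I get: $R=\emptyset$ (i.e.\ $(S1^*)$) plus $|L|\ge 2$ (i.e.\ $\neg(S2)$) $\Rightarrow$ L-Cantorval; symmetrically for R-Cantorval; and if both $L,R$ nonempty, no overlap ($\neg(S3)$), and $|L|\ge2$ or $|R|\ge2$ ($\neg((S1)\wedge(S2))$), then gaps are accumulated by gaps and proper components from both sides, giving an M-Cantorval. The converses come from the mutual exclusivity noted after the conditions: the five parameter regions $A_1,\dots,A_5$ partition everything, so once each ``if'' direction is proved and the five output types are pairwise distinct (an interval is not a Cantor set is not any Cantorval; L-, R-, M-Cantorvals are pairwise non-homeomorphic by the gap-endpoint criterion in their definitions), each ``only if'' follows automatically.

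The main obstacle I expect is part~(5) together with the fine analysis underlying (3)--(4): namely proving that \emph{every} gap endpoint of $D_p$ is accumulated by proper components and gaps from the prescribed side(s), and never merely isolated from one side. This requires showing two things at every scale — that a deleted interval is never later refilled when the relevant block has size $\ge 2$ (the ``no overlap'' hypothesis $\neg(S3)$ is what guarantees this persistence), and that arbitrarily close to each endpoint there genuinely sit nondegenerate components, which means exhibiting, near $\bar x_n\pm\tfrac1{p^n}$, smaller basic intervals that survive forever — again controlled by Lemma~\ref{Lpom}(1.1)--(1.5). Parts (2) and (4) of Lemma~\ref{Lpom} ($(n)$-bi-obtainability being stable and sufficient for membership) are the right device for producing those surviving intervals: a point is $(i)$-bi-obtainable for all large $i$ precisely when a whole interval around it survives, so the density-of-components statements reduce to exhibiting bi-obtainable points, and I would organize the proof of (3)--(5) around that reduction.
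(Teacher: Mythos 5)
Your overall route is the same as the paper's: compute $A-B$ explicitly, package the missing digits into a negative block $L$ and a positive block $R$, translate $(S1),(S2),(S1^*),(S2^*),(S3)$ into statements about $|L|,|R|$ and the overlap $R\cap(p+L)$, get (1) from Corollary~\ref{wBBFS}, (2) from a nowhere-density argument driven by Lemma~\ref{Lpom}(1), and (3)--(5) from a gap-endpoint analysis using Lemma~\ref{Lpom}(2),(2'),(3),(4), with the ``only if'' directions coming for free from the partition of parameter space and mutual exclusivity of the five types. However, your dictionary is swapped, and this is precisely the bookkeeping you defer to ``careful verification.'' With $L=\langle l_1+r_2-p,\min\{-l_2,-r_1\}\rangle$ and $R=\langle\max\{l_1,r_2\},p-r_1-l_2\rangle$, a direct computation gives $(S1)\Leftrightarrow|L|\le1$, $(S1^*)\Leftrightarrow L=\emptyset$, $(S2)\Leftrightarrow|R|\le1$, $(S2^*)\Leftrightarrow R=\emptyset$ --- the opposite of what you wrote. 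Your geometric claim is swapped in the compensating way: since the right endpoint of a gap has digits eventually $-p+1$ and sits at the left edge of all deeper basic intervals, it is $L=\emptyset$ (all of $\langle-p+1,0\rangle$ being admissible digits) that produces an interval adjacent on the right of every gap, hence an L-Cantorval when moreover $|R|\ge2$; your version ($R=\emptyset$ plus $|L|\ge2$ gives an L-Cantorval) would actually yield an R-Cantorval. The two errors cancel so your final statement $(S1^*)\wedge\neg(S2)\Rightarrow\text{L-Cantorval}$ agrees with the theorem, but as written both intermediate claims are false, and either one alone would derail the proof. Also, your parenthetical gloss of $(S3)$ as ``$p-1\notin D$'' cannot be right, since $p-1\in A-B$ always; the correct equivalence is $(S3)\Leftrightarrow R\cap(p+L)\neq\emptyset$, which is what the paper uses (via the element $p-r_1-l_2$) to exhibit a never-refilled deleted subinterval inside every basic interval. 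Finally, for the exclusivity argument you do not need L- and R-Cantorvals to be non-homeomorphic (they are homeomorphic via $x\mapsto-x$); you only need that no single set satisfies two of the five definitions simultaneously, which is what the paper invokes.
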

\begin{proof}
Since for any parameters exactly one of the conditions given in (1)--(5) holds and a set cannot have at the same time two of the following forms: the interval $[-1, 1]$, a Cantor set, an L-Cantorval, an R-Cantorval or an M-Cantorval, we only need to prove the implications "$\Leftarrow$". 

Let us put
$$A := A(l_1, r_1, p) = \langle 0, l_1 - 1\rangle \cup \langle p-r_1, p - 1\rangle,$$
$$B := A(l_2, r_2, p) = \langle 0, l_2 - 1\rangle \cup \langle p-r_2, p - 1\rangle .$$
Then $A - B$ is equal to
$$\langle -p + 1,\; l_1   +r_2- p-1\rangle\;  \cup \;\langle -l_2 + 1,\; l_1 - 1\rangle\; \cup\;\langle -r_1+ 1,\; r_2-1\rangle\; 
\cup\;\langle p-r_1-l_2+1,\; p - 1\rangle .$$
Observe that the second and the third component contain $0$. Hence $A -B$ is equal to
$$\langle -p + 1,\; l_1  +r_2- p-1\rangle\; \cup\; \langle \min\{-l_2 , -r_1\}+ 1,\; \max\{l_1, r_2\}-1\rangle \;\cup\; \langle p-r_1-l_2+1,\; p - 1\rangle.$$

Let $$L = \langle l_1+r_2-p, \min\{-l_2, -r_1\}\rangle,$$ $$R= \langle \max\{l_1, r_2\}, p-r_1-l_2 \rangle.$$ Then $A-B = \langle -p+1, p-1 \rangle \setminus (L \cup R)$. 

Observe that $L$ has at most one element if and only if $l_1+r_2-p \geq -l_2 \;\text{ or }\; l_1+r_2-p\geq -r_1$ and $R$ has at most one element if and only if $p-r_1-l_2\leq l_1  \;\text{ or }\;  p-r_1-l_2\leq r_2$. So, we have

\begin{equation} \label{o1}
 \left\{ \begin{array}{ccc}
(S1) \Leftrightarrow |L| \leq 1 \Leftrightarrow l_1+r_2-p+1 \notin L \\ 
(S1^*) \Leftrightarrow L = \emptyset \Leftrightarrow l_1+r_2-p \notin L\\
(S2) \Leftrightarrow |R| \leq 1 \Leftrightarrow p-r_1-l_2-1 \notin R \\
(S2^*) \Leftrightarrow R = \emptyset \Leftrightarrow p-r_1-l_2 \notin R
\end{array}%
\right. 
\end{equation}
Moreover, since $p+\min\{-l_2, -r_1\} > p-r_1-l_2$ and
$$R \cap (p+L) = \langle \max\{l_1, r_2\}, p-r_1-l_2 \rangle \cap \langle l_1+r_2, p+ \min\{-l_2, -r_1\}\rangle,$$
we have 
\begin{equation}\label{o2}
(S3)\Leftrightarrow R \cap (p+L) \neq \emptyset\Leftrightarrow p-r_1-l_2, l_1+r_2 \in R \cap (p+L) .
\end{equation}

Note that $-p+1,0,p-1 \in A-B$, so the assumptions of Lemma \ref{Lpom} are satisfied. Observe also that if $R \cap (p+L) = \emptyset$, then $k \in A-B$ or $k-p \in A-B$ for any $k \in \langle 0, p-1 \rangle$. Indeed, if $k \notin A-B$, then $k \in R$, and thus $0>k-p \notin L$, so $k-p \in A-B$. Hence if $(S3)$ does not hold, then the assumptions of Lemma \ref{Lpom} (4) are satisfied.

By Proposition \ref{dod} we have $$C(l_1,r_1, p) - C(l_2, r_2, p) = A_p - B_p = (A-B)_p.$$ Since $A-B$ has more than one element, $(A-B)_p$ is perfect, by Proposition \ref{P1}.

Ad (1) From Corollary \ref{wBBFS} it follows that the equality $C(l_1,r_1, p) - C(l_2, r_2, p) = [-1, 1]$ is equivalent to the inequality $\Delta(A - B) \leq 2$, which holds if and only if $L$ and $R$ have at most one element. And this is satisfied if and only if $(S1)$ and $(S2)$ hold.

Ad (2) 
We have to show that $(A-B)_p$ is nowhere dense. Let $x\in (A-B)_p$, $(x_i)\in (A-B)^{\N}$ be such that $x= \sum_{i=1}^{\infty} \frac{x_i}{p^i}$ and $\ve > 0$. We will find an interval $(y,z) \subset (x- \ve,x+\ve)$, which is disjoint with $(A-B)_p$. Let $n \in \N$ be such that $\frac{2}{p^n} < \ve$. Define $y,z \in [0,1]$ in the following way: 
$$y_i = x_i = z_i\mbox{ for }i \leq n,$$ 
$$y_{n+1} = z_{n+1} = p-r_1-l_2-1, $$ 
$$y_{n+2}=p-r_1-l_2-2, z_{n+2}=y_{n+2}+3=p-r_1-l_2+1,$$ 
$$y_i = p-1, z_i = -p+1\mbox{ for }i > n+2$$
and let $y = \sum_{i=1}^{\infty} \frac{y_i}{p^i}$, and $z = \sum_{i=1}^{\infty} \frac{z_i}{p^i}$. Then $y_i,z_i \in \langle -p+1,p-1 \rangle$ for $i \in \N$. Hence $$z-y = \frac{3}{p^{n+2}}-\sum_{i=n+3}^{\infty}\frac{2p-2}{p^i}=\frac{1}{p^{n+2}}>0.$$

We have $|x-y| \leq \sum_{i=n+1}^{\infty} \frac{2p-2}{p^i} = \frac{2}{p^n} < \ve.$ Similarly, $|z-x| < \ve$, and thus $(y,z) \subset (x-\ve, x+\ve)$. Let $w \in (y,z)$. We will show that $w \notin (A-B)_p$. On the contary, assume that $w \in (A-B)_p$. So, there is a sequence $(w_i) \in (A-B) ^{\N}$ such that $w = \sum_{i=1}^{\infty} \frac{w_i}{p^i}$. 
Since 
$$y_{n+2} =p-r_1-l_2-2\geq p-r_1-l_2-l_1-r_2\geq 0,$$ 
we have 
$$y = \overline{y}_{n+1} + \frac{y_{n+2}}{p^{n+2}} + \sum_{i=n+3}^{\infty}\frac{p-1}{p^i} > \overline{y}_{n+1}$$ and 
$$z = \overline{y}_{n+1} + \frac{p-r_1-l_2+1}{p^{n+2}} - \sum_{i=n+3}^{\infty}\frac{p-1}{p^i}< \overline{y}_{n+1} + \frac{p}{p^{n+2}} < \overline{y}_{n+1} + \frac{1}{p^{n+1}},$$ 
and so $w \in (\overline{y}_{n+1}, \overline{y}_{n+1}+\frac{1}{p^{n+1}}).$ By Lemma \ref{Lpom} (1), $\overline{w}_{n+1} = \overline{y}_{n+1}$ or $\overline{w}_{n+1} = \overline{y}_{n+1} + \frac{1}{p^{n+1}}$. 
Consider the cases. 

$1^{\mbox{o}}$  $\overline{w}_{n+1} = \overline{y}_{n+1}$.
We have 
$$w \in (y,z)=\left(\overline{y}_{n+2}+\sum_{i=n+3}^{\infty} \frac{p-1}{p^i},\overline{z}_{n+2}-\sum_{i=n+3}^{\infty} \frac{p-1}{p^i}\right)= \left(\overline{y}_{n+2}+\frac{1}{p^{n+2}},\overline{z}_{n+2}-\frac{1}{p^{n+2}}\right)$$$$ = \left(\overline{y}_{n+2}+\frac{1}{p^{n+2}},\overline{y}_{n+2}+\frac{3}{p^{n+2}}-\frac{1}{p^{n+2}}\right)=\left(\overline{y}_{n+2}+\frac{1}{p^{n+2}},\overline{y}_{n+2}+\frac{2}{p^{n+2}}\right).$$ 
Using again Lemma \ref{Lpom} (1), we have $$w_{n+2} = y_{n+2}+1 = p-r_1-l_2-1$$ or $$w_{n+2}= y_{n+2}+2 = p-r_1-l_2.$$ Since $(S2)$ does not hold, by (\ref{o1}), we obtain $|R|>1$, and consequently $\{p-r_1-l_2-1, p-r_1-l_2\} \subset R$. Thus, $w_{n+2} \in R$, a contradiction. 

$2^{\mbox{o}}$  $\overline{w}_{n+1} = \overline{y}_{n+1}+ \frac{1}{p^{n+1}}$.
Since $y_{n+1} + 1> 0$, by Lemma \ref{Lpom} (1.1), we have $$w_{n+1} = y_{n+1}+1 = p-r_1-l_2 \in R$$ or $$w_{n+1}= -p+y_{n+1}+1= -p+p-r_1-l_2.$$ Since $p-r_1-l_2 \in R$ and $w_{n+1} \in A-B$, we must have $w_{n+1}=-r_1-l_2$, but, because $(S3)$ holds, by (\ref{o2}), $p-r_1-l_2 \in (L + p)$, and hence $-r_1-l_2\in L$. So, $w_{n+1} \notin (A-B)$, a contradiction.
Thus, $(y,z) \in (x- \ve, x+\ve) \setminus (A-B)_p$, so $(A-B)_p$ is nowhere dense. It is also perfect, so it is a Cantor set.

Ad (3)--(5)
First, we will show some additional conditions $(C1),(C2)$ and $(C3)$.

$(C1)$  \,\,\,    If conditions $(S2)$ and $(S3)$ do not hold, then every gap in $(A-B)_p$ is accumulated on the left by infinitely many gaps and proper components of $(A-B)_p$. 

First, observe that $1 \in A-B$. Indeed, if $1 \notin A-B$, then $1 \in R$, and so $l_1=r_2=1$. Since $(S2)$ does not hold, we have $r_1+l_2 < p-1$. Hence $r_1+l_2+l_1+r_2 < p+1,$ i.e. $(S3)$ holds, a contradiction. Thus, $1 \in A-B$.

Now, let $x$ be the left endpoint of a gap in $(A-B)_p$ and $(x_i) \in (A-B)^{\N}$ be such that $x = \sum_{i=1}^{\infty} \frac{x_i}{p_i}$. Then, by Lemma \ref{Lpom} (2), there is $n \in \N$ such that 
$$x_i = p-1 \mbox{ for } i>n.$$ 
Let $\ve > 0$ and $m>n$ be such that $\frac{1}{p^m} < \ve$. We will find intervals $[y,z] \subset (x- \ve,x) \cap (A-B)_p$ and $(s,t) \subset (x-\ve,x) \setminus (A-B)_p$. Define $y,z,s,t$ in the following way:
$$y_i=z_i=s_i=t_i = x_i\mbox{ for }i \leq m,$$ 
$$y_{m+1} = 0, z_{m+1}=1, s_{m+1} = p-r_1-l_2-2, t_{m+1} =s_{m+1}+3= p-r_1-l_2+1,$$ 
$$y_i = z_i =0, s_i =x_i= p-1, t_i =-p+1\mbox{ for }i > m+1$$
and put $y = \sum_{i=1}^{\infty} \frac{y_i}{p_i},z = \sum_{i=1}^{\infty} \frac{z_i}{p_i},t = \sum_{i=1}^{\infty} \frac{t_i}{p_i},s = \sum_{i=1}^{\infty} \frac{s_i}{p_i}$. Observe
that $z>y$, $x>t$ and 
\begin{equation*}
t-s=\frac{3}{p^{m+1}}-\sum_{i=m+2}^{\infty }\frac{2p-2}{p^{i}}=\frac{1}{%
p^{m+1}}>0.
\end{equation*}%
Since $1\in A-B$, we have $1\notin R$, so every element of $R$ is greater than $1$. Because $(S2)$ does not hold, from (\ref{o1}) we obtain $%
p-r_{1}-l_{2}-1\in R$. Hence $p-r_{1}-l_{2}-1 > 1$, and therefore
\begin{equation*}
s_{m+1} = p-r_{1}-l_{2}-2 \geq 1=z_{m+1}.
\end{equation*}%
Consequently, $s>z$. Thus,
\begin{equation*}
0<x-t<x-s<x-z<x-y=\sum_{i=m+1}^{\infty }\frac{p-1}{p^{i}}=\frac{1}{p^{m}}%
<\varepsilon ,
\end{equation*}%
which implies $\left[ y,z\right] \subset \left( x-\varepsilon ,x\right) $
and $\left( s,t\right) \subset \left( x-\varepsilon ,x\right) $.

Now, we will show that $\left[ y,z\right] \subset \left( A-B\right) _{p}$.
Let $w\in \left[ y,z\right] $. Then%
\begin{equation*}
w\in \left[ y,z\right] =\left[ \overline{y}_{m+1},\overline{z}_{m+1}\right] =%
\left[ \overline{y}_{m+1},\overline{y}_{m+1}+\frac{1}{p^{m+1}}\right] .
\end{equation*}
Since $y_{m+1} = 0 \in (A-B), z_{m+1}=1 \in (A-B)$ and $y_i=z_i =x_i \in (A-B)$ for $i \leq m$, the point $w$ is $(m+1)$-bi-obtainable. Since $(S3)$ does not hold, by (\ref{o2}), we have $R \cap (p+L) = \emptyset.$ By Lemma \ref{Lpom} (4), $w$ is $(i)$-bi-obtainable for $i > m+1$, and by Lemma \ref{Lpom} (3), $w \in (A-B)_p,$ and thus $[y,z] \subset (A-B)_p.$ 

Now, we will show that $(s,t) \cap (A-B)_p = \emptyset$. Let $v \in (s,t)$. Assume on the contrary that there exists a sequence $(v_i) \in (A-B)^{\N}$ such that $v =\sum_{i=1}^{\infty} \frac{v_i}{p^i}$. Since $s_{m+1}\geq 1>0$ and $s_{i}=p-1>0$ for $i>m+1$, we have $\overline{s}_{m}<s$. Thus  $\overline{s}_{m}<s<t<x$, and so 
\begin{equation*}
v\in \left( s,t\right) \subset \left(\overline{s}_m, \overline{s}_m + \frac{1}{p^m}\right)=\left(\overline{s}_m, \overline{s}_m + \sum_{i=m+1}^{\infty}\frac{p-1}{p^i}\right)=\left(\overline{s}_m, \overline{x}_m + \sum_{i=m+1}^{\infty}\frac{x_i}{p^i}\right)= \left( \overline{s}_{m},x\right) .
\end{equation*}

Using Lemma \ref{Lpom} (1), we have $ \overline{v}_m = \overline{s}_m $ or $\overline{v}_m  = x$.
If $\overline{v}_m = x$,  
by Lemma \ref{Lpom} (2'), $x$ is not an endpoint of a gap, a contradiction.
So, $\overline{v}_m = \overline{s}_m $. 
We have 
$$v \in (s,t) =\left(\overline{s}_{m+1} + \sum_{i=m+2}^{\infty} \frac{p-1}{p^i}, \overline{t}_{m+1} -\sum_{i=m+2}^{\infty} \frac{p-1}{p^i}\right)= \left(\overline{s}_{m+1}  + \frac{1}{p^{m+1}}, \overline{t}_{m+1}  - \frac{1}{p^{m+1}}\right)$$$$=\left(\overline{s}_{m+1}  + \frac{1}{p^{m+1}}, \overline{s}_{m+1}+\frac{3}{p^{m+1}}  - \frac{1}{p^{m+1}}\right)=\left(\overline{s}_{m+1}  + \frac{1}{p^{m+1}}, \overline{s}_{m+1}+\frac{2}{p^{m+1}}\right).$$ 
Using again Lemma \ref{Lpom} (1), we have $$v_{m+1}= s_{m+1}+1 = p-r_1-l_2-1$$ or $$v_{m+1} =s_{m+1}+2= p-r_1-l_2.$$ Since $(S2)$ does not hold, by (\ref{o1}), we obtain $|R| > 1,$ and consequently $\{ p-r_1-l_2-1, p-r_1-l_2 \} \subset R$. Hence $v_{m+1} \in R$, a contradiction. 
Thus, $(s,t) \subset (x-\ve ,x) \setminus (A-B)_p$, which finishes the proof of $(C1)$. 

The following (symmetric to $(C1)$) condition follows directly from $(C1)$ and Remark \ref{min}. 

$(C2)$  \,\,\,    If conditions $(S1)$ and $(S3)$ do not hold, then every gap in $(A-B)_p$ is accumulated on the right by infinitely many gaps and proper components of $(A-B)_p$. 

Now, we will prove the last condition $(C3)$.

$(C3)$  \,\,\,    If $(S1^*)$ holds, but condition $(S2)$ does not hold, then every gap in $(A-B)_p$ has an adjacent interval (contained in $(A-B)_p$) on the right. 

Let $x$ be the right endpoint of a gap in $(A-B)_p$ and $(x_i) \in (A-B)^{\N}$ be such that $x = \sum_{i=1}^{\infty} \frac{x_i}{p_i}$. By Lemma \ref{Lpom} (2), there is $n \in \N$ such that $x_i = -p+1$ for $i>n$. Let $$y_i = x_i\mbox{ for }i \leq n,$$ $$y_i = 0\mbox{ for }i > n.$$ Put $y = \sum_{i=1}^{\infty} \frac{y_i}{p^i}$. We will show that $[x,y] \subset (A-B)_p$. Let $w \in [x,y]$. 
Since $$w \in [x,y] = \left[y-\sum_{i=n+1}^{\infty} \frac{p-1}{p^i},y\right]= \left[y-\frac{1}{p^{n}},y\right],$$ there is $q \in [-\frac{1}{p^n},0]$ such that $w = y+q$. By Observation \ref{wiel} (v), there is a sequence $(w_i)_{i=n+1}^{\infty} \in \langle -p+1, 0 \rangle ^{\N}$ such that $\sum_{i=n+1}^{\infty} \frac{w_i}{p^i} = q$, and so $y+ \sum_{i=n+1}^{\infty} \frac{w_i}{p^i} = w$. Since $(S1^*)$ holds, by (\ref{o1}), we have $L = \emptyset$. Therefore, $\langle -p+1, 0 \rangle \subset A-B$, and so $w_i \in A-B$ for $ i > n$. For $i \leq n$ put $w_i = y_i \in A-B$. We found a sequence $(w_i) \in (A-B)^{\N}$ such that $w= \sum_{i=1}^{\infty} \frac{w_i}{p^i}.$ Thus, $w \in (A-B)_p$, which finishes the proof of $(C3)$.

Now, we can prove (3). Assume that $(S1^* )$ holds, but $(S2)$ does not hold. Then $(S3)$ also does not hold and $(A-B)_p$ is not an interval, by (1), so it has a gap. Hence the assumptions of $(C1)$ and $(C3)$ are satisfied. Therefore, by $(C1)$, every gap in $(A-B)_p$ is accumulated on the left by infinitely many gaps and proper components of $(A-B)_p$, and by $(C3)$, every gap has an adjacent interval on the right. Thus, $(A-B)_p$ is an L-Cantorval, which finishes the proof of (3).

Since minus L-Cantorval is an R-Cantorval, by Remark \ref{min}, (4) follows from (3).

Now, we prove (5). Assume that $(S1^*)$, $(S2^*)$, $(S3)$ do not hold and at least one of $(S2)$ or $(S1)$ also does not hold.
If $(S2)$ does not hold, then, by $(C1)$, every gap in $(A-B)_p$ is accumulated on the left by infinitely many gaps and proper intervals contained in $(A-B)_p$. If $(S1)$ also does not hold, then, from $(C2)$ we infer that every gap in $(A-B)_p$ is accumulated on the right by infinitely many gaps and intervals, and thus we get the assertion.

Assume that $(S1)$ holds, but $(S2)$ does not hold. The other case again easily follows from Remark \ref{min} and the fact that minus M-Cantorval is an M-Cantorval.
First, we will show that $-1 \in A-B$ or $-p+2 \in A-B$. If $-1 \notin A-B$, then $-1 \in L$, and hence $r_1=l_2=1$. Since $(S2)$ does not hold, we have $l_1+r_1+l_2<p$, and thus $l_1 < p-2$. Because $l_1 \geq 1$, we have $p > 3$. From the fact that $(S1)$ holds, but $(S1^*)$ does not, it follows that $l_1+r_2=p-1$. We have $$l_1+r_2-p-1 = -2 \geq -p+2.$$ 
The fact that $(S1^*)$ does not hold and $-1 \in L$ together with $(\ref{o1})$ gives us that $L=\{-1\}.$ So, $-p+2 \in A-B$. Thus, we proved that $-1 \in A-B$ or $p-2 \in A-B$.

We will assume that $-1 \in A-B$. If $p-2 \in A-B$, then the proof is similar, with small difference, which we will point out.

Let $x$ be the right endpoint of a gap in $(A-B)_p$ and $(x_i) \in (A-B)^{\N}$ be such that $x = \sum_{i=1}^{\infty} \frac{x_i}{p_i}$. Then, by Lemma \ref{Lpom} (2), there is $n \in \N$ such that $x_i =-p+1$ for $i>n$. Let $\ve > 0$ and $m>n$ be such that $\frac{1}{p^m} < \ve$. We will find intervals $[y,z] \subset (x ,x+\ve) \cap (A-B)_p$ and $(s,t) \subset (x ,x+\ve) \setminus (A-B)_p$. Define $y,z,s,t$ in the following way:
$$y_i=z_i=s_i=t_i = x_i\mbox{ for }i \leq m,$$ 
$$y_{m+1} = -1, z_{m+1}=0, s_{m+1}=t_{m+1} = l_1+r_2-p-1,$$
$$y_{m+2} = z_{m+2}=0, s_{m+2} = p-r_1-l_2-2, t_{m+2} =s_{m+2}+3= p-r_1-l_2+1,$$ 
$$y_i = z_i =0, s_i = p-1, t_i =-p+1\mbox{ for }i > m+1$$ 
and put $y = \sum_{i=1}^{\infty} \frac{y_i}{p_i},z = \sum_{i=1}^{\infty} \frac{z_i}{p_i},t = \sum_{i=1}^{\infty} \frac{t_i}{p_i},s = \sum_{i=1}^{\infty} \frac{s_i}{p_i}$. 
%We have $y_{m+1},z_{m+1},s_{m+1},t_{m+1} \in \langle -p+1, 0 \rangle$ and $y_{m+2},z_{m+2},s_{m+2},t_{m+2} \in \langle 0, p-1 \rangle$.
We will show that $y,z,s,t \in (x,x+\ve)$. Of course, $y,z,s,t$ are greater than $x$. We have $$y <z = x + \sum_{i=m+1}^{\infty} \frac{p-1}{p^i} = x + \frac{1}{p^m} < x + \ve$$ and also
$$t-s = \frac{3}{p^{m+2}} - \sum_{i=m+3}^{\infty}\frac{2p-2}{p^i} = \frac{1}{p^{m+2}} > 0,$$
so $s < t$. 
Moreover, since $(S1^*)$ does not hold, by (\ref{o1}), $l_1+r_2-p \in L$, and in consequence $$l_1+r_2-p \leq 0,$$ and thus 
$$t= \overline{z}_m + \frac{l_1+r_2-p-1}{p^{m+1}} + \frac{p-r_1-l_2+1}{p^{m+2}} - \sum_{i=m+3}^{\infty} \frac{p-1}{p^i} \leq z + \frac{-1}{p^{m+1}} + \frac{p-r_1-l_2}{p^{m+2}} < z + \frac{-1}{p^{m+1}} + \frac{p}{p^{m+2}} = z.$$
Hence $s < t < z < x+\ve$, which ends the proof that $y,z,s,t \in (x,x+\ve)$.

We will show that $[y,z] \subset (A-B)_p$. Let $w \in [y,z]$. Then $$w \in[y,z]= \left[\overline{z}_m -\frac{1}{p^{m+1}},\overline{z}_m\right] = [\overline{y}_{m+1},\overline{z}_{m+1}].$$ Since $y_{m+1} = -1 \in A-B, z_{m+1}=0 \in A-B$ and $y_i=z_i=x_i \in A-B$ for $i \leq m$, the point $w$ is $(m+1)$-bi-obtainable (if $-1 \notin A-B$ it suffices to change $y_{m+1} = p-1, z_{m+1} = p-2$). Since $(S3)$ does not hold, by (\ref{o2}), we have $R \cap (p+L) = \emptyset.$ By Lemma \ref{Lpom} (4), $w$ is $(i)$-bi-obtainable for $i > m+1$, so, by Lemma \ref{Lpom} (3), $w \in (A-B)_p,$ and thus $[y,z] \subset (A-B)_p \cap (x, x+\ve).$ 

Now, we will show that $(s,t) \cap (A-B)_p = \emptyset$. Let $v \in (s,t)$. Assume on the contrary that there exists a sequence $(v_i) \in (A-B)^{\N}$ such that $v= \sum_{i=1}^{\infty} \frac{v_i}{p^i}$. Since $(S2)$ does not hold, by (\ref{o1}), we have $p-r_1-l_2-1 \in R$, and so $p-r_1-l_2-1 \geq 0$. Therefore, 
$$ s = \overline{s}_m + \frac{l_1+r_2-p-1}{p^{m+1}}+\frac{p-r_1-l_2-2}{p^{m+2}} + \sum_{i=m+3}^{\infty}\frac{p-1}{p^i}$$$$ \geq \overline{s}_m + \frac{l_1+r_2-p-1}{p^{m+1}} + \frac{-1}{p^{m+2}} + \frac{1}{p^{m+2}} > \overline{s}_m - \frac{p}{p^{m+1}} = \overline{s}_m - \frac{1}{p^{m}}$$
and
$$t = \overline{s}_m + \frac{l_1+r_2-p-1}{p^{m+1}}+\frac{p-r_1-l_2+1}{p^{m+2}} - \sum_{i=m+3}^{\infty}\frac{p-1}{p^i} $$$$< \overline{s}_m + \frac{l_1+r_2-p-1}{p^{m+1}}+\frac{p}{p^{m+2}} =\overline{s}_m + \frac{l_1+r_2-p}{p^{m+1}} \leq \overline{s}_m,$$
because $l_1+r_2-p \leq 0$.
Hence $$v \in \left(\overline{s}_m- \frac{1}{p^m}, \overline{s}_m\right) = \left(\overline{x}_m- \sum_{i=m+1}^{\infty}\frac{p-1}{p^i}, \overline{s}_m\right)= \left(\overline{x}_m+ \sum_{i=m+1}^{\infty}\frac{x_i}{p^i}, \overline{s}_m\right)=(x, \overline{s}_m).$$ 
Thus, using Lemma \ref{Lpom} (1), we obtain $ \overline{v}_m = \overline{s}_m$ or $\overline{v}_m = x$, but the second case is impossible (by Lemma \ref{Lpom} (2')). Hence $ \overline{v}_m = \overline{s}_m$. We also have 
$$ s = \overline{s}_{m+1} +\frac{p-r_1-l_2-2}{p^{m+2}} + \sum_{i=m+3}^{\infty}\frac{p-1}{p^i} \geq \overline{s}_{m+1} +\frac{-1}{p^{m+2}} + \frac{1}{p^{m+2}} = \overline{s}_{m+1}$$
and
$$t = \overline{s}_{m+1} +\frac{p-r_1-l_2+1}{p^{m+2}} - \sum_{i=m+3}^{\infty}\frac{p-1}{p^i} < \overline{s}_{m+1} +\frac{p}{p^{m+2}} =  \overline{s}_{m+1} +\frac{1}{p^{m+1}},$$
and therefore
 $$v \in \left(\overline{s}_{m+1}, \overline{s}_{m+1} + \frac{1}{p^{m+1}}\right).$$ Thus, by Lemma \ref{Lpom} (1), $\overline{v}_{m+1} = \overline{s}_{m+1}$ or $\overline{v}_{m+1} = \overline{s}_{m+1}+1$ and since $\overline{v}_m = \overline{s}_m$, we have ${v}_{m+1} = s_{m+1}$ or ${v}_{m+1} = s_{m+1}+1$. However, $$s_{m+1}+1 =l_1+r_2-p \notin A-B,$$ so $v_{m+1} = s_{m+1}$. Since
$$v \in (s,t) =\left(\overline{s}_{m+2} + \sum_{i=m+3}^{\infty} \frac{p-1}{p^i}, \overline{s}_{m+1} + \frac{s_{m+2}+3}{p^{m+2}} -\sum_{i=m+3}^{\infty} \frac{p-1}{p^i}\right)$$$$= \left(\overline{s}_{m+2} + \frac{1}{p^{m+2}}, \overline{s}_{m+2} + \frac{2}{p^{m+2}}\right),$$ 
 we have, by Lemma \ref{Lpom} (1), $$v_{m+2} = s_{m+2} + 1= p-r_1-l_2-1$$ or $$v_{m+2} = s_{m+2} + 2 = p-r_1-l_2,$$ but since $(S2)$ does not hold, we have $\{ p-r_1-l_2-1, p-r_1-l_2 \} \cap (A-B) = \emptyset$, a contradiction. Thus, $(s,t) \subset (x,x +\ve) \setminus (A-B)_p$. 
Finally, we obtain (5). 
%First, observe that for any sequences $(a_i),(b_i) \in \Z(p)^{\N}$, if for some $m \in \N$, $a_i = b_i$ for $i < m$ and $a_m < b_m$, then $\sum_{i=1}^{\infty} \frac{a_i}{p^i} \leq \sum_{i=1}^{\infty} \frac{b_i}{p^i}$.
%Indeed,  
%$$\sum_{i=1}^{\infty} \frac{a_i-b_i}{p^i} = \frac{a_m-b_m}{p^m} + \sum_{i=m+1}^{\infty} \frac{a_i-b_i}{p^i} \leq \frac{-1}{p^m} + \sum_{i=m+1}^{\infty} \frac{p-1}{p^i} =\frac{-1}{p^m} + \frac{1}{p^m} =0.$$
%
%Therefore, since $w \in (y,z)$, we have $w_i =y_i = z_i$ for $i < n+1$ and $w_{j} > y_{j}$, $w_{l} < z_{l}$ for some $j, l > n$. From the fact that for $i > n+1$, $y_i = p-1$ and $z_i = 0$ it follows that $w_{n+1} > y_{n+1} = k-1$ and $w_{n+1} < z_{n+1} = k+1$, so $w_{n+1} = k$. Thus $w \notin A_p$. Finally, $(y,z) \subset (x-\ve, x+\ve) \setminus A_p$, and hence $A_p$ is nowhere dense. 
% 
\end{proof}

\begin{corollary} \label{wniosek1}
Let $l,r,p\in \N$, $p > 2$, $l+r < p$. Then 
\begin{itemize}
\item[(1)] $C(l, r, p) - C(l, r, p) = [-1, 1]$
if and only if 
\begin{equation*}
2l+r\geq p \;\text{ or }\; l+2r \geq p;
\end{equation*}%
\item[(2)] $C(l, r, p) - C(l, r, p)$ is a Cantor set if and only if 
\begin{equation*}
2l+2r \leq p;
\end{equation*}%
\item[(3)] $C(l, r, p) - C(l, r, p)$ is an M-Cantorval if and only if
\begin{equation*}
2l+r<p\;\text{ and }\; l+2r< p \text{ and }\; 2l+2r > p.
\end{equation*}%
\end{itemize}
\end{corollary}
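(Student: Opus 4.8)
The plan is to specialize Theorem \ref{twR} to the case $l_1=l_2=l$ and $r_1=r_2=r$ and simply read off what the conditions $(S1)$, $(S2)$, $(S3)$, $(S1^*)$, $(S2^*)$ become under this substitution. Plugging in, one checks directly that $(S1)$ becomes ``$2l+r\geq p$ or $l+2r\geq p$'', and $(S2)$ becomes \emph{exactly} the same statement; likewise $(S1^*)$ and $(S2^*)$ both become ``$2l+r>p$ or $l+2r>p$'', while $(S3)$ becomes $2l+2r\leq p$. This collapse of $(S1)$ with $(S2)$ and of $(S1^*)$ with $(S2^*)$ is the only thing that genuinely needs verifying, and it merely reflects the fact that $C(l,r,p)-C(l,r,p)$ is symmetric about $0$.

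With this in hand, parts (1) and (2) are immediate. Part (1) of Theorem \ref{twR} says $C(l,r,p)-C(l,r,p)=[-1,1]$ iff $(S1)$ and $(S2)$ hold, which is now just ``$2l+r\geq p$ or $l+2r\geq p$''; part (2) says the difference is a Cantor set iff $(S3)$ holds, i.e. iff $2l+2r\leq p$. For part (3), Theorem \ref{twR}(5) asserts that one gets an M-Cantorval iff $(S1^*)$, $(S2^*)$, $(S3)$ all fail and at least one of $(S1)$, $(S2)$ also fails. Since $(S1^*)\equiv(S2^*)$ and $(S1)\equiv(S2)$ in the symmetric case, and since failure of $(S1)$ (namely $2l+r<p$ and $l+2r<p$) already forces failure of $(S1^*)$, this whole conjunction reduces precisely to $2l+r<p$, $l+2r<p$, and $2l+2r>p$, which is the stated condition.

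Finally, I would remark why the L-Cantorval and R-Cantorval alternatives of Theorem \ref{twR} never occur here: by Theorem \ref{twR}(3), an L-Cantorval requires $(S1^*)$ to hold and $(S2)$ to fail, but in the symmetric case $(S1^*)$ implies $(S1)$, which is the same statement as $(S2)$; hence $(S2)$ would hold, a contradiction, and symmetrically for R-Cantorvals. (Alternatively, $C(l,r,p)-C(l,r,p)$ is symmetric about $0$, whereas an L-Cantorval reflects to an R-Cantorval, so a symmetric set can be neither.) Thus the five mutually exclusive cases of Theorem \ref{twR} collapse to the three listed in the corollary. There is no real obstacle in this argument — it is a routine, if slightly bookkeeping-heavy, specialization; the only place to be careful is getting the chain of implications $\neg(S1)\Rightarrow\neg(S1^*)$ right so that part (3) comes out with strict inequalities.
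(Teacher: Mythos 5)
Your proposal is correct and follows essentially the same route as the paper, which proves the corollary by exactly this substitution $l_1=l_2=l$, $r_1=r_2=r$ into Theorem \ref{twR}; your write-up merely spells out the collapse $(S1)\equiv(S2)$, $(S1^*)\equiv(S2^*)$ and the implication $\neg(S1)\Rightarrow\neg(S1^*)$ that the paper leaves implicit. The added remark on why the L- and R-Cantorval cases cannot occur in the symmetric situation is accurate and a helpful bonus, though not strictly needed since each part of Theorem \ref{twR} is already an equivalence.
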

\begin{proof}
Putting $l$ in the place of $l_1$ and $l_2$, and $r$ in the place of $r_1$ and $r_2$, in the conditions from Theorem \ref{twR}, we receive the assertion.
\end{proof}
\begin{example}
Let $r_1=1$, $l_1=1$, $r_2=1$, $l_2=2$, $p=4$. We will show that $C(l_1,r_1,p)-C(l_2,r_2,p) = [-1,1]$. Of course, $l_1+r_1<p, l_2+r_2<p$. Moreover, $l_1+l_2+r_2 \geq p$ and $l_1+l_2+r_1 \geq p$, and consequently conditions $(S1)$ and $(S2)$ are satisfied. So, $C(l_1,r_1,p)-C(l_2,r_2,p) = [-1,1]$. At the same time, $2l_1+2r_1 \leq p $ and $2l_2+r_2 \geq p$, thus, by Corollary \ref{wniosek1}, $C(l_2,r_2,p)-C(l_2,r_2,p) = [-1,1]$, but $C(l_1,r_1,p)-C(l_1,r_1,p)$ is a Cantor set.
\end{example}
\begin{example}
Let $r_1=2$, $l_1=3$, $r_2=3$, $l_2=1$, $p=7$. We will show that $C(l_1,r_1,p)-C(l_2,r_2,p)$ is an L-Cantorval and, by symmetry, $C(l_2,r_2,p)-C(l_1,r_1,p)$ is an R-Cantorval. We have $l_1+r_1<p, l_2+r_2<p.$ Moreover, $l_1+r_1+r_2 > p$, so $(S1^*)$ holds and $l_1+r_1+l_2 < p$, $r_1+l_2+r_2 < p$, therefore $(S2)$ does not hold. In consequence, $$C(l_1,r_1,p)-C(l_2,r_2,p)=\{-6,-5,-4,-3,-2,-1,0,1,2,5,6\}_7$$ is an L-Cantorval, and $$C(l_2,r_2,p)-C(l_1,r_1,p)=\{-6,-5,-2,-1,0,1,2,3,4,5,6\}_7$$ is an R-Cantorval. At the same time, $2l_1+r_1 \geq p$ and $2r_2+l_2 \geq p$, thus, by Corollary \ref{wniosek1}, $$C(l_1,r_1,p)-C(l_1,r_1,p)= C(l_2,r_2,p)-C(l_2,r_2,p)= [-1,1].$$
\end{example}
Let us introduce the notion of a symmetric S-Cantor set. Observe that $C(l, r, p)$ is symmetric with respect to $\frac{1}{2}$ if and only if $l=r$. Then the set $C(l, p) := C(l, l, p)$ is called a symmetric S-Cantor set. 
\begin{corollary}
Let $l_1,l_2,p \in \N,$ $p > 2,$ $2l_1 < p,$ $2l_2 < p$. Then 
\begin{itemize}
\item[(1)] $C(l_1, p) - C(l_2, p) = [-1, 1]$
if and only if 
\begin{equation*}
2l_1+l_2\geq p \;\text{ or }\; l_1+2l_2 \geq p;
\end{equation*}%
\item[(2)] $C(l_1, p) - C(l_2, p)$ is a Cantor set if and only if
\begin{equation*}
2l_1+2l_2 \leq p;
\end{equation*}%
\item[(3)] $C(l_1, p) - C(l_2, p)$ is an M-Cantorval if and only if
\begin{equation*}
2l_1+l_2<p \;\text{ and }\; l_1+2l_2 < p \text{ and }\; 2l_1+2l_2 > p.
\end{equation*}%
\end{itemize}
\end{corollary}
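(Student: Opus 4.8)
The plan is to obtain all three statements by specializing Theorem \ref{twR} to the symmetric case, exactly in the spirit of the proof of Corollary \ref{wniosek1}. Since $C(l,p)$ is by definition $C(l,l,p)$, I would set $r_1 := l_1$ and $r_2 := l_2$ and simply rewrite the five conditions $(S1)$, $(S2)$, $(S3)$, $(S1^*)$, $(S2^*)$ under this substitution. A one-line computation shows that both disjuncts of $(S1)$ become ``$2l_1+l_2\geq p$'' and ``$l_1+2l_2\geq p$'', and that the two disjuncts of $(S2)$ become the very same pair; hence $(S1)\Leftrightarrow(S2)$ and each is equivalent to ``$2l_1+l_2\geq p$ or $l_1+2l_2\geq p$''. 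In the same way $(S1^*)\Leftrightarrow(S2^*)\Leftrightarrow$``$2l_1+l_2> p$ or $l_1+2l_2> p$'', while $(S3)$ becomes simply $2l_1+2l_2\leq p$.

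With these identifications, parts (1) and (2) are immediate. By Theorem \ref{twR}(1), $C(l_1,p)-C(l_2,p)=[-1,1]$ if and only if $(S1)$ and $(S2)$ both hold, which is now the single condition ``$2l_1+l_2\geq p$ or $l_1+2l_2\geq p$''; and by Theorem \ref{twR}(2), the difference is a Cantor set if and only if $(S3)$ holds, i.e.\ $2l_1+2l_2\leq p$.

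For part (3) I would invoke Theorem \ref{twR}(5): the difference is an M-Cantorval if and only if $(S1^*)$, $(S2^*)$, $(S3)$ all fail and at least one of $(S1)$, $(S2)$ fails. The equivalences above do the work: since $(S1)\Leftrightarrow(S2)$ and $(S1^*)\Leftrightarrow(S2^*)$, and since $(S1^*)\Rightarrow(S1)$ (so failure of $(S1)$ already forces failure of $(S1^*)$), the entire package collapses to ``$(S1)$ fails and $(S3)$ fails'', that is, $2l_1+l_2<p$ and $l_1+2l_2<p$ and $2l_1+2l_2>p$. It is also worth remarking that, under the substitution, the hypotheses $(S1^*)\wedge\neg(S2)$ and $(S2^*)\wedge\neg(S1)$ of parts (3) and (4) of Theorem \ref{twR} become unsatisfiable, so neither an L-Cantorval nor an R-Cantorval can occur here — which matches the fact that $C(l_1,p)-C(l_2,p)$ is symmetric about $0$, whereas the negative of an L-Cantorval is an R-Cantorval.

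There is essentially no hard step: the only point requiring care is the bookkeeping of the symmetrized inequalities and the verification that case (5) of Theorem \ref{twR} genuinely reduces to the two stated conditions, with no leftover disjunct and with the L/R-Cantorval cases falling away.
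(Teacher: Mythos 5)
Your proposal is correct and follows exactly the paper's approach: the paper's proof is the one-line substitution $r_1:=l_1$, $r_2:=l_2$ in Theorem \ref{twR}, and your bookkeeping of the symmetrized conditions (in particular that $(S1)\Leftrightarrow(S2)$, $(S1^*)\Leftrightarrow(S2^*)$, and that cases (3) and (4) of the theorem become vacuous) is accurate. Your write-up simply makes explicit the verifications the paper leaves to the reader.
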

\begin{proof}
Putting $l_1$ in the place of $r_1$, and $l_2$ in the place of $r_2$ in Theorem \ref{twR}, we obtain the assertion.
\end{proof}
Using the above corollary, we can receive a result similar to the theorem obtained by Kraft (although with additional possibility).
In the paper \cite{K}, the author considered central Cantor sets generated by a constant $\alpha$. The construction of such sets is similar as the construction of the classical Cantor set. The difference is that the constant ratio of the length of a removed interval to the length of the interval from which we remove is equal to $\alpha$ and not necessarily $\frac{1}{3}.$ Kraft proved that the difference set of such Cantor set generated by $\alpha$ is equal to $[-1,1]$ if $\alpha \leq \frac{1}{3}$, and is a Cantor set if $\alpha > \frac{1}{3}$.
\begin{corollary} \label{poj}
Let $l, p \in \N$,  $p > 2$, $2l < p$. Then 
\begin{itemize}
\item[(1)] $C(l, p) - C(l, p) = [-1, 1]$
if and only if 
\begin{equation*}
\frac{l}{p} \geq \frac{1}{3};
\end{equation*}%
\item[(2)] $C(l, p) - C(l, p)$ is a Cantor set if and only if 
\begin{equation*}
\frac{l}{p} \leq \frac{1}{4};
\end{equation*}%
\item[(3)] $C(l, p) - C(l, p)$ is an M-Cantorval if and only if
\begin{equation*}
\frac{l}{p} \in \left(\frac{1}{4}, \frac{1}{3}\right).
\end{equation*}%
\end{itemize}
\end{corollary}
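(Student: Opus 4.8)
The plan is to obtain this corollary as a direct specialization of the results already proved, with no new analysis of the set itself. Setting $l_1 = l_2 = l$ and $r_1 = r_2 = l$ (equivalently, putting $r = l$ in Corollary \ref{wniosek1}) makes all of the parameters in Theorem \ref{twR} equal, and I would then simply rewrite the resulting linear inequalities in $l$ and $p$ as inequalities for the ratio $\frac{l}{p}$: the condition $(S1) \wedge (S2)$ collapses to $3l \geq p$, i.e.\ $\frac{l}{p} \geq \frac{1}{3}$; the condition $(S3)$ becomes $4l \leq p$, i.e.\ $\frac{l}{p} \leq \frac{1}{4}$; and the M-Cantorval condition, namely $3l < p$ and $4l > p$, becomes $\frac{l}{p} \in \left(\frac{1}{4}, \frac{1}{3}\right)$. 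This is exactly the three-part statement.

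The one point that deserves an explicit word is why only three of the five alternatives of Theorem \ref{twR} survive. With $l_1 = r_1 = l_2 = r_2 = l$ both $(S1)$ and $(S2)$ reduce to $3l \geq p$, and both $(S1^*)$ and $(S2^*)$ reduce to $3l > p$; hence whenever $(S1^*)$ holds so does $(S2)$, which rules out case (3) of Theorem \ref{twR}, and symmetrically case (4). So for symmetric parameters the set is always the interval $[-1,1]$, a Cantor set, or an M-Cantorval, according as $3l \geq p$, $4l \leq p$, or $3l < p < 4l$ --- three ranges of $\frac{l}{p}$ that partition $\left(0, \frac{1}{2}\right)$, matching items (1)--(3).

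There is no real obstacle here: everything is forced by Theorem \ref{twR}, and the only work is the bookkeeping of passing from inequalities among $l$ and $p$ to inequalities for $\frac{l}{p}$, together with the short remark above explaining why the five-way classification collapses to a three-way one in the symmetric case. If one wanted extra robustness, one could also note that the three ranges are pairwise disjoint and cover all admissible ratios, so that no separate verification of the ``only if'' directions is needed beyond the equivalences recorded here.
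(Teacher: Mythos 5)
Your proposal is correct and is exactly the paper's route: the corollary is obtained by specializing Theorem \ref{twR} (via Corollary \ref{wniosek1} with $r=l$, or equivalently the symmetric corollary with $l_1=l_2=l$), and the inequalities collapse to $3l\geq p$, $4l\leq p$, and $3l<p<4l$ respectively. Your added remark on why the L- and R-Cantorval cases are vacuous in the symmetric setting is a correct and worthwhile observation that the paper leaves implicit.
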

\begin{example}
Let $l_1=2$, $l_2=1$, $p=5$. Then $2l_1+l_2 \geq p$, so $C(l_1,p) - C(l_2,p) = [-1,1]$. Moreover, $\frac{l_1}{p} \geq \frac{1}{3}$ and $\frac{l_2}{p} <\frac{1}{4}$, therefore $C(l_1,p)-C(l_1,p) = [-1,1]$, but $C(l_2,p)-C(l_2,p)$ is a Cantor set.
\end{example}
\begin{example}
Let $l=1,p=3$. Observe that the set $C = C(l,p)$ is the classical Cantor ternary set. By Corollary \ref{poj}, we get the known result $C-C = [-1,1]$.
\end{example}
\begin{example} \label{przyklad}
Let $l=2$, $p=7$. Then $\frac{l}{p} \in (\frac{1}{4}, \frac{1}{3})$, and therefore 
$$C(l,p)-C(l,p) = \{-6,-5,-4,-1,0,1,4,5,6\}_7$$
 is an M-Cantorval.
\end{example}
\begin{remark}
The classical Cantor set is often defined as $\{0,2\}_3$, that is, the set of all numbers in $[0,1]$ that can be written in the ternary system using only zeros and twos. Above examples show that Theorem \ref{twR} gives us new examples of Cantorvals and Cantor sets that can be described in the similar manner. For example, the set $\{-6,-5,-4,-1,0,1,4,5,6\}_7$ from Example \ref{przyklad}, which is an M-Cantorval, can be described as the set of all numbers in $[-1,1]$ which can be written in the signed digit representation with base $7$ and the digit set $\{-6,-5,\dots,5,6\}$ without using $-3,-2,2$ and $3$.  
\end{remark}
%\section*{Statements and declarations}
%The author has no competing interests to declare that are relevant to the content of this article.
\section*{Acknowledgments} The author would like to thank Tomasz Filipczak for many fruitful conversations during the preparation of the paper.

The author was supported by the GA \v{C}R project 20-22230L.

\end{document}